\documentclass[final,leqno,onefignum,onetabnum]{siamltex1213}
\usepackage{amsfonts}
\usepackage{amssymb}
\usepackage{mathrsfs}
\usepackage{amsmath}
\usepackage{mathtools}
\usepackage{hyperref}
\usepackage{algorithm}
\usepackage{enumitem}

\hypersetup{colorlinks=true,citecolor=blue,linkcolor=blue,urlcolor=black}

\newtheorem{remark}{Remark}[section]
\newtheorem{example}[remark]{Example}

\begin{document}
	\title{Eigenfunction Behavior and Adaptive Finite Element Approximations of Nonlinear Eigenvalue Problems in Quantum Physics
	\thanks{This work was partially supported by the National Science Foundation of China under grants 91730302 and 11671389 and the Key Research Program of Frontier Sciences of the Chinese Academy of Sciences under grant QYZDJ-SSW-SYS010.
	}}
	\author{Bin Yang\thanks{LSEC, Institute of Computational Mathematics and Scientific/Engineering Computing, Academy of Mathematics and Systems Science, Chinese Academy of Sciences, Beijing 100190, China; and School of Mathematical Sciences, University of Chinese Academy of Sciences, Beijing 100049, China (binyang@lsec.cc.ac.cn).}
	\and Aihui Zhou\thanks{LSEC, Institute of Computational Mathematics and Scientific/Engineering Computing, Academy of Mathematics and Systems Science, Chinese Academy of Sciences, Beijing 100190, China; and School of Mathematical Sciences, University of Chinese Academy of Sciences, Beijing 100049, China ({azhou@lsec.cc.ac.cn}).}}
	\date{}
	\maketitle
	\begin{abstract}
		In this paper, we investigate a class of nonlinear eigenvalue problems resulting from quantum physics.  We first prove that the eigenfunction cannot be a polynomial on any open set, which may be reviewed as a refinement of the classic unique continuation property. Then we apply the non-polynomial behavior of eigenfunction to show that the adaptive finite element approximations are convergent even if the initial mesh is not fine enough. We finally remark that similar arguments can be applied to a class of linear eigenvalue problems that improve the relevant existing result.
	\end{abstract}
	{\bf Keywords:}\quad adaptive finite element approximation, complexity, convergence, nonlinear eigenvalue problem, non-polynomial behavior, unique continuation property
	\vskip 0.2cm

	{\bf AMS subject classifications:}\quad 35Q55, 65N15, 65N25, 65N30, 81Q05.
	
	\section{Introduction}
	
	In this paper, we investigate the eigenfunction behavior and adaptive finite element approximations of the following nonlinear eigenvalue problem:
	find $(\lambda_i,\phi_i)\in\mathbb{R}\times H_0^1(\Omega)~(i=1,2,\ldots,N)$ such that
	\begin{equation}\label{Orig-Eq}
		\left\{\begin{aligned}
		\left(-\kappa\Delta+V+\mathcal{N}(\rho) \right)\phi_i&=\lambda_i \phi_i,\quad \text{in~} \Omega,\quad i = 1,2,\ldots,N,\\
		\int_{\Omega}\phi_i \phi_j&=\delta_{ij},
		\end{aligned}
		\right.
	\end{equation}
	where $\Omega\subset\mathbb{R}^3$, $N\ge 1$, $\kappa>0$, $V:\Omega\to \mathbb{R}$ is a given function,  $\rho=\sum\limits_{i=1}^N|\phi_i|^2$, and $\mathcal{N}$ maps a nonnegative function to some function on $\Omega$.
 	We observe that  Schr\"{o}dinger-Newton equation modeling the quantum state reduction \cite{harrison03,penrose96}, Gross-Pitaevskii equation (GPE) describing  Bose-Einstein condensates (BEC) \cite{bao13,zhou04} and  Thomas-Fermi-von Weizs\"{a}cker (TFvW) type equations and Kohn-Sham equations appearing in electronic density functional theory \cite{becke14,chen13,bris03,lieb81,martin04} are typical examples of \eqref{Orig-Eq}.

	We understand that it is significant to solve eigenvalue problem \eqref{Orig-Eq} accurately and efficiently. And we note that the a priori knowledge of their eigenfunctions is very helpful in designing and analysis of numerical methods. To improve the approximation accuracy and reduce the computational cost in solving the eigenvalue problem, we see from the regularity of eigenfunction \cite{gong-shen-zhang-zhou08,zhang-zhou16} that adaptive finite element approaches should be employed (see also \cite{chen14,chen11-afem,dai-he-zhou15,dai-xu-zhou08,davydov-young-steinmann16,morin09,motamarri12,tsuchida-tsukada96} and references cited therein). We observe that the adaptive finite element analysis of nonlinear eigenvalue problem \eqref{Orig-Eq} in \cite{chen14,chen11-afem,chen11} requires that the initial mesh size is small enough. However, our numerical experiments show that the small initial mesh size requirement is unnecessary \cite{chen14,chen11-afem,chen11}. In this paper,  we study the adaptive finite element approximations when the initial mesh is not fine, for which we need to apply an eigenfunction behavior that is also investigated.

	We see that the unique continuation property is significant in the context of partial differential equations (see, e.g., \cite{jerison85,reed78,wolff93} and references cited therein). After looking into the behavior of eigenfunction of \eqref{Orig-Eq}, we find that the eigenfunction cannot be a polynomial on any open subset, which may be reviewed as a refinement of the classic unique continuation property and is indeed a key in our adaptive finite element analysis. Taking into account the eigenfunction behavior,  we are indeed able to prove the convergence of adaptive finite element approximations without the requirement of small initial mesh size.
	
	The rest of this paper is organized as follows. In the next section, we describe some basic notation and review the adaptive finite element method for solving eigenvalue problem \eqref{Orig-Eq}. Then we show some polynomial property which is crucial in our adaptive finite element analysis. In Section \ref{sec:behaviour-eigenf},
	we obtain that any eigenfunction of problem \eqref{Orig-Eq} cannot be a polynomial on any open subset of $\Omega$ under some assumptions, which may be reviewed as an extension and refinement of the classic unique continuation property. In Section \ref{sec:afem}, based on the non-polynomial behavior of eigenfunctions, we study the convergence of the adaptive finite element method. We finally remark that similar arguments can be applied to a class of linear eigenvalue problems  that improve the relevant existing result.
	
	\section{Preliminaries}
	Let $\Omega\subset\mathbb{R}^3$ be a polyhedral bounded domain. Let $\mathbb{Q}_+=\mathbb{Q}\cap[0,\infty)$ and $\alpha=(\alpha_1,\alpha_2,\alpha_3)\in\mathbb{Q}_+^3$ is an $3$-tuple. We denote $|\alpha|=\alpha_1+\alpha_2+\alpha_3$, define $x^\alpha=\xi_1^{\alpha_1}\xi_2^{\alpha_2}\xi_3^{\alpha_3}$ for any $x=(\xi_1,\xi_2,\xi_3)\in\mathbb{R}^3$, and use notation
	\[
	\partial_i \phi=\frac{\partial\phi}{\partial \xi_i}, ~i=1,2,3.
	\]
	For any $\alpha=(\alpha_1,\alpha_2,\alpha_3),\beta=(\beta_1,\beta_2,\beta_3)\in\mathbb{Q}^3_+$, we denote $\alpha\succ \beta$ if the first non-zero element of $\alpha-\beta=(\alpha_1-\beta_1,\alpha_2-\beta_2,\alpha_3-\beta_3)$ is greater than $0$ and $\alpha\succcurlyeq \beta$ if $\alpha\succ\beta$ or $\alpha=\beta$. For convenience, we define
	\[
	\mathcal{P}_{\mathbb{Q}_+}(\Omega) = \left\{ \sum_{\alpha\in I\subset\mathbb{Q}_+^3}  a_\alpha x^\alpha: x\in\Omega,~|I|<\infty \text{ and }a_\alpha \in\mathbb{R},\forall \alpha\in I \right\},
	\]
	where $|I|$ means the cardinality of $I$.
	We shall use the notation
	\[
	\mathcal{P}_{\mathbb{Q}_+}^{\mu}(\mathbb{R}^3)=\left\{ p^\mu: p\in\mathcal{P}_{\mathbb{Q}_+}(\mathbb{R}^3)\text{ and }p^{\mu} \text{ is the real function} \right\}
	\]
	for any $\mu\ge 0$.
	We call $a_\alpha x^\alpha$ with $a_\alpha\ne 0$ a monomial. Denote $|\alpha|$ the degree of monomial $a_\alpha x^\alpha$. We shall let the degree of polynomial $0$ be $-\infty$. For any $p\in \mathcal{P}_{\mathbb{Q}_+}(\Omega)$, define $\deg p$ as the max degree of terms of $p$, which is called the degree of $p$. We shall also denote $\deg p^{\mu}=\mu\deg p$ for any $p\in\mathcal{P}_{\mathbb{Q}_+}(\mathbb{R}^3)$ and $\mu\in\mathbb{Q}_+$ and $\deg(p/q) = \deg p - \deg q$ for any $p,q\in\mathcal{P}_{\mathbb{Q}_+}(\mathbb{R}^3)$. Let $\mathcal{P}_\ell(\Omega)$ be the set of real polynomials on $\Omega$ with degrees not greater than $\ell$. It is clear that $\mathcal{P}_\ell(\Omega)\subset\mathcal{P}_{\mathbb{Q}_+}(\Omega)$.
	The standard notation $H^s(\Omega)(s\ge 0)$ for Sobolev space and their associated norms $\|\cdot\|_{s,\Omega}$ shall also be used \cite{adams75}. We write
	\[
	G\subset\subset\Omega
	\]
	if $\bar{G}\subset\Omega$ and $\bar{G}$ is compact.
	$H^s_{\text{loc}}(\Omega)$ denotes the space of function $v$ satisfying that for any open set $G\subset\subset\Omega$, $v\in H^s(G)$. We use $\mathscr{P}(s,(c_1,c_2))$ to denote a class of functions satisfying some growth conditions:
	\begin{equation*}
	\mathscr{P}(s,(c_1,c_2))=\left\{ f:\exists ~a_1,a_2\in \mathbb{R} \mbox{ such that }
	c_1 t^{s}+a_1 \leq f(t) \leq c_2 t^{s}+a_2\quad \forall t\ge 0 \right\}
	\end{equation*}
	with $c_1\in \mathbb{R}$ and $c_2, s\in [0,\infty)$.
	
	\subsection{Quantum eigenvalue problem}
	We consider nonlinear eigenvalue problem \eqref{Orig-Eq} when $V$ has a form of
	\begin{equation}\label{V-nonlin-nonloc}
	V=-\sum_{j=1}^M\frac{f_j}{g_j}
	\end{equation}
	with $p_j,q_j\in\mathcal{P}_{\mathbb{Q}_+}^{\mu}(\mathbb{R}^3)(j=1,2,\cdots,M)$ for some $\mu\in\mathbb{Q}_+$, $\mathcal{N}$ is divided into two parts:
	\begin{equation}\label{N-nonlin-nonloc}
	\mathcal{N}(\rho)=\mathcal{N}_1(\rho)+\mathcal{N}_2(\rho),
	\end{equation}
	where $\rho=\displaystyle\sum_{i=1}^N|\phi_i|^2,~\mathcal{N}_1:[0,\infty)\to \mathbb{R}$ is defined by
	\begin{equation}\label{N1}
		\mathcal{N}_1(t) = \sum_{i=1}^K \frac{p_i(t)}{h_i(t)}\ln q_i(t)
	\end{equation}
	with $p_i,h_i\in\mathcal{P}_{\mathbb{Q}_+}(\mathbb{R})$, polynomials $q_i$ satisfying $q_i|_{[0,\infty)}>0$ and $q_i\ne 1(i=1,2,\cdots,K)$,
	and $\mathcal{N}_2$ is given by a convolution integral
	\begin{equation}\label{N2}
	\mathcal{N}_2(\rho)=\alpha\int_{\Omega}\frac{\rho(y)}{|\cdot-y|}\textup{d}y
	\end{equation}
	with some constant $\alpha$.
	
	The energy functional associated with \eqref{Orig-Eq} is
	\begin{equation*}\label{eq:energy}
	E(\Phi)=\int_\Omega\left(\kappa\sum_{i=1}^N|\nabla\phi_i|^2+V\rho_\Phi+\mathcal{E}(\rho_\Phi)\right)+\frac{\alpha}{2} D(\rho_\Phi,\rho_\Phi)
	\end{equation*}
	for $\Phi=(\phi_1,\phi_2,\ldots,\phi_N)\in\mathcal{H}\equiv (H_0^1(\Omega))^N$, where $\rho_\Phi=\displaystyle\sum_{i=1}^N|\phi_i|^2$, $\mathcal{E}:[0,\infty)\to \mathbb{R}$ is defined
	by
	\[
	\mathcal{E}(s)=\int_0^s\mathcal{N}_1(t)\textup{d}t,
	\]
	and $D(\cdot,\cdot)$ is a bilinear form as follows
	\begin{equation*}\label{eq:D(f,g)}
	D(f,g)=\int_\Omega\int_\Omega \frac{f(x)g(y)}{|x-y|}\textup{d}x\textup{d}y.
	\end{equation*}
	For any $\Phi\in\mathcal{H}$, we denote
	\begin{equation*}
		\|\Phi\|_{s,\Omega}=\left( \sum_{i=1}^N \|\phi_i\|_{s,\Omega} \right)^{1/2},\quad\|\Phi\|_{0,p,\Omega}=\left( \sum_{i=1}^N \|\phi_i\|_{0,p,\Omega} \right)^{1/2}.
	\end{equation*}
	
	We see that \eqref{Orig-Eq} includes the GPE, the Schr\"odinger-Newton equation, the TFvW type equation, and the Khon-Sham equation (see Remark \ref{rmk:GPE}, Example \ref{eg:SN}, Example \ref{eg:TFW}, and Example \ref{eg:KS} for more details).
	
	Let $\mathcal{Q}$ be a subspace of $\mathcal{H}$:
	\[
	\mathcal{Q}=\{\Phi\in\mathcal{H}:\Phi^T\Phi=I^{N\times N}\},
	\]
	where $\Phi^T\Psi=\left(\displaystyle\int_\Omega\phi_i\psi_j\right)\in\mathbb{R}^{N\times N}$. The ground state charge density of system \eqref{Orig-Eq} is obtained by solving minimization problem
	\begin{equation}\label{minimization-KS}
	\inf\{E(\Phi):\Phi\in\mathcal{Q}\}.
	\end{equation}
	We see that any minimizer $\Phi=(\phi_1,\ldots,\phi_N)$ of \eqref{minimization-KS} satisfies
	\begin{equation}\label{weakKS}
	\left\{
	\begin{aligned}
	\langle H_\Phi\phi_i,v \rangle&=\left(\sum_{j=1}^N\lambda_{ji}\phi_j,v\right)\quad\forall v\in H_0^1(\Omega),\quad i=1,2,\ldots,N,\\
	\int_\Omega\phi_i\phi_j&=\delta_{ij},
	\end{aligned}
	\right.
	\end{equation}
	where $H_\Phi:H_0^1(\Omega)\to H^{-1}(\Omega)$ is a Hamiltonian operator defined by
	\[
	\langle H_\Phi u,v \rangle =\kappa(\nabla u,\nabla v)+\left( V\rho_\Phi+\mathcal{N}(\rho_\Phi)u,v\right) \quad\forall u,v\in H_0^1(\Omega)
	\]
 	and
	\[
	\Lambda=(\lambda_{ij})_{i,j=1}^N=\left( \int_\Omega\phi_i H_\Phi\phi_j \right)_{i,j=1}^N
	\]
	is the Lagrange multiplier. We call $(\Lambda,\Phi)$ a ground state of \eqref{weakKS} and define the set of ground states by
	\[
	\Theta=\left\{ (\Lambda,\Phi)\in\mathbb{R}^{N\times N}\times\mathcal{Q}: E(\Phi)=\min_{\Psi\in \mathcal{Q}}E(\Psi)\text{ and } (\Lambda,\Phi)\text{ solves } \eqref{weakKS} \right\}.
	\]
	We  define the set of states of \eqref{weakKS} by
	\[
	\mathcal{W}=\{(\Lambda,\Phi)\in\mathbb{R}^{N\times N}\times\mathcal{H}:(\Lambda,\Phi)\text{ solves } \eqref{weakKS}\}.
	\]
	
	Since electron density $\rho_\Phi$ and operator $H_\Phi$ are invariant under any unitary transform, we may diagonalize Lagrange multipliers $\Lambda$ and arrive at
	\begin{equation}\label{eq:weakKS}
	\left\{
	\begin{aligned}
	\langle H_\Phi\phi_i,v \rangle&=\mu_i(\phi_i,v)\quad\forall v\in H_0^1(\Omega),\quad i=1,2,\ldots,N,\\
	\int_\Omega\phi_i\phi_j&=\delta_{ij},
	\end{aligned}
	\right.
	\end{equation}
	which is equivalent to \eqref{weakKS} and a weak form of \eqref{Orig-Eq}.
	
	\subsection{An adaptive finite element method}
	Let $d_\Omega$ be the diameter of $\Omega$ and $\{\mathcal{T}_h\}$ be a shape regular family of nested conforming meshes over $\Omega$ with size $h\in (0,d_\Omega)$: there exists a constant $\gamma^*$ such that
	\[
	\frac{h_\tau}{\rho_\tau}\le \gamma^*\quad \forall\tau\in \mathcal{T}_h,
	\]
	where $h_\tau$ is the diameter of $\tau$, $\rho_\tau$ is the diameter of the biggest ball contained in $\tau$, and $h=\max\{h_\tau : \tau\in\mathcal{T}_h\}$. Let
	$\mathcal{E}_h$ denote the set of interior faces of $\mathcal{T}_h$.
We shall also use a slightly abused of notation that $h$ denotes the mesh size function defined by
	\[
	h(x)=h_\tau, \quad x\in \tau \quad\forall \tau\in\mathcal{T}_h.
	\]
	Let $S^h(\Omega)\subset H^1(\Omega)$ be the corresponding finite element space consisting of continuous piecewise polynomials over $\mathcal{T}_h$ of degrees no greater than $n\ge 1$ and
	\[
	S_0^h(\Omega)=S^h\cap H_0^1(\Omega).
	\]
	Let $V_h=(S_0^h(\Omega))^N$.
	
Consider the finite element approximation of \eqref{minimization-KS}:
	\begin{equation}\label{d-min-KS}
	\inf\left\{ E(\Phi_h):\Phi_h\in V_h\cap\mathcal{Q} \right\}.
	\end{equation}
	We see that any minimizer $\Phi_h=(\phi_{1,h},\ldots,\phi_{N,h})$ of \eqref{d-min-KS} solves  Euler-Lagrange equation
	\begin{equation}\label{eq:d-weakKS}
	\left\{
	\begin{aligned}
	\langle H_{\Phi_h} \phi_{i,h},v \rangle&=\left( \sum_{j=1}^N\lambda_{ji,h}\phi_{j,h},v \right)\quad\forall v\in S_0^h(\Omega),\quad i=1,2,\ldots,N,\\
	\int_\Omega\phi_{i,h}\phi_{j,h}&=\delta_{ij}
	\end{aligned}
	\right.
	\end{equation}
	with  Lagrange multiplier
	\[
	\Lambda_h=(\lambda_{ij,h})_{i,j=1}^N=\left( \int_\Omega\phi_{i,h}H_{\Phi_h}\phi_{j,h} \right)_{i,j=1}^N
	\]
	when the energy functional is differentiable. Define the set of finite dimensional ground state solutions:
	\[
	\Theta_h=\left\{ (\Lambda_h,\Phi_h)\in\mathbb{R}^{N\times N}\times (\mathcal{Q}\cap V_h ):E(\Phi_h)=\min_{\Psi\in \mathcal{Q}\cap V_h} E(\Psi),\, (\Lambda_h,\Phi_h)\text{ solves } \eqref{eq:d-weakKS} \right\}.
	\]
	With using the unitary transformation, we have the following discrete Kohn-Sham equation
	\begin{equation}\label{eq:weakDKS}
	\left\{
	\begin{aligned}
	\langle H_{\Phi_h}\phi_{i,h},v \rangle&=\mu_{i,h}(\phi_{i,h},v)\quad\forall v\in H_0^1(\Omega),\quad i=1,2,\ldots,N,\\
	\int_\Omega\phi_{i,h}\phi_{j,h}&=\delta_{ij}.
	\end{aligned}
	\right.
	\end{equation}
	
	We recall that the adaptive finite element method is to repeat the following procedure \cite{chen14}:
	\[
	\text{Solve}\to\text{Estimate}\to\text{Mark}\to\text{Refine}.
	\]
	For convenience, we shall replace  subscript $h$ (or $h_k$) by an iteration counter $k$ of the adaptive method afterwards.
	
	Given an initial triangulation $\mathcal{T}_0$ so that the dimension of $S_0^h$ is greater than or equal to $N$. The above procedure generates a sequence of nested triangulations $\mathcal{T}_k(k=1,2,\cdots)$. Given an iteration counter $k$, procedure ``Solve" is to get the discrete solution over $\mathcal{T}_k$. Procedure ``Estimate" determines the element indicators for all elements $\tau\in\mathcal{T}_k$. In this step, a posteriori error estimators play an critical role. Then, element indicators are used by  procedure ``Mark" to create a subset $\mathcal{M}_k$ of marked elements $\tau\in\mathcal{T}_k$. To maintain mesh conformity, we usually partition a few more elements $\tau\in\mathcal{T}_k\setminus\mathcal{M}_k$ in  procedure ``Refine".
	
	Given a triangulation $\mathcal{T}_h$ and the corresponding finite element solution $(\Lambda_h,\Phi_h)$, we define finite element residual $\mathcal{R}_\tau(\Phi_h)$ and  jump $J_e(\Phi_h)$ by
	\begin{gather*}
	\mathcal{R}_\tau(\Phi_h)=\left( H_{\Phi_h}\phi_{i,h}-\sum_{j=1}^N\lambda_{ji}\phi_{j,h} \right)_{i=1}^N\quad \text{in }\tau\in\mathcal{T}_h,\\
	J_e(\Phi_h)=\left(j_e(\phi_{i,h})\right)_{i=1}^N,\quad j_e(\phi_{i,h})=\kappa\nabla\phi_{i,h}|_{\tau_1}\cdot\overrightarrow{n_1}+\kappa\nabla\phi_{i,h}|_{\tau_2}\cdot\overrightarrow{n_2},
	\end{gather*}
	where $e$ is the common face of elements $\tau_1$ and $\tau_2$ with unit outward normals $\overrightarrow{n_1}$ and $\overrightarrow{n_2}$, respectively. For $\tau\in\mathcal{T}_h$, we define the local error indicator $\eta_h(\Phi_h,\tau)$ as follows:
	\[
	\eta_h^2(\Phi_h,\tau)=h_\tau^2\|\mathcal{R}_\tau(\Phi_h)\|_{0,\Omega}^2+\sum_{e\in\mathcal{E}_h,e\subset\partial\tau}h_e\|J_e(\Phi_h)\|_{0,e}^2.
	\]
	Depending on the a posteriori error indicators $\{ \eta_k(\Phi_k,\tau) \}_{\tau\in \mathcal{T}_k}$,  procedure ``Mark" gives a strategy to create a subset of elements $\mathcal{M}_k$ of $\mathcal{T}_k$. Here, we consider ``maximum strategy" which only requires that the set of marked elements $\mathcal{M}_k$ contains at least one element of $\mathcal{T}_k$ holding the largest value estimator. Namely, there exists at least one element $\tau_k^\text{max}\in\mathcal{M}_k$ such that
	\begin{equation*}
	\eta_k(\Phi_k,\tau_k^\text{max})=\max_{\tau\in\mathcal{T}_k}\eta_k(\Phi_k,\tau).
	\end{equation*}
	
	The adaptive finite element algorithm for solving \eqref{eq:weakKS} is stated as follows \cite{chen14,chen11-afem,chen11}:
	\begin{algorithm}[H]
		\caption{}
		\label{algo:AFEM-KS}
		\begin{enumerate}
			\item Pick an initial mesh $\mathcal{T}_0$ and let $k=0$.
			\item Solve \eqref{eq:weakDKS} on $\mathcal{T}_k$ to get discrete solutions $(\mu_{i,k},\phi_{i,k})(i=1,2,\ldots,N)$.
			\item Compute local error indicates $\eta_k(\Phi_k,\tau)$ for all $\tau\in\mathcal{T}_k$.
			\item Construct $\mathcal{M}_k\subset\mathcal{T}_k$ by Maximum strategy.
			\item Refine $\mathcal{T}_k$ to get a new conforming mesh $\mathcal{T}_{k+1}$.
			\item Let $k=k+1$ and go to 2.
		\end{enumerate}
	\end{algorithm}

	We observe that there are a number of works on analyzing adaptive finite element methods in literature. We refer to \cite{bonito-demlow16,dai-he-zhou15,dai-xu-zhou08,gallistl15,giani-graham09} and references cited therein for linear eigenvalue problems and to \cite{chen14,chen11-afem,chen11} for nonlinear cases when the initial mesh is fine enough. We see that under the so-called Non-Degeneracy assumption\footnote{No eigenfunction is equal to a polynomial of degree $\le n$ on an open subset of $\Omega$, where $n$ denotes the polynomial degree of the finite element bases being used.}, \cite{morin09} proved convergence of an adaptive finite element method starting from any initial mesh for some linear elliptic eigenvalue problem.
	
	\subsection{A polynomial theory}
	In our analysis, we need the following basic results, which are motivated by \cite{zhou12}.
	\begin{lemma}\label{delta-sum-t^j}
		Let $k$ be a prime number and
		\[
		\delta=a_1 t+a_2 t^2+\cdots+a_{k-1} t^{k-1},
		\]
		where $t,a_i\in\mathbb{R}~(i=1,2,\ldots,k-1)$. Then there exist real polynomials
		\[
		\left\{p_j(t_1,t_2,\ldots,t_k): j=2,\ldots,k\right\}
		\]
		such that $p_j(t_1,t_2,\ldots,t_k)$ is a polynomial of degree $j-1$ with respect to $t_k$ and
		\begin{eqnarray*}\label{homogeneous-(k-1)}
			&p_j(\lambda t_1,\lambda t_2,\ldots,\lambda t_{k-1},t_k)=\lambda^j p_j(t_1,t_2,\ldots,t_{k-1},t_k) ~\forall \lambda\in \mathbb{R},\\
			&\delta^k+\sum_{j=2}^k p_j(a_1,a_2,\ldots,a_{k-1},t^k)\delta^{k-j}=0.
		\end{eqnarray*}
	\end{lemma}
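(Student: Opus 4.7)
The plan is to realize $\delta$ as a root of a polynomial of degree $k$ over the ring $\mathbb{R}[a_1,\dots,a_{k-1},t^k]$ by a roots-of-unity trick. Let $\omega=e^{2\pi\mathrm{i}/k}$ and set
\[
P(X)=\prod_{j=0}^{k-1}\bigl(X-\delta(\omega^j t)\bigr)=X^k+\sum_{j=1}^{k}c_j\,X^{k-j},
\]
so that $c_j=(-1)^{j}e_j\bigl(\delta(t),\delta(\omega t),\dots,\delta(\omega^{k-1}t)\bigr)$ is the $j$-th elementary symmetric polynomial in the ``Galois conjugates'' of $\delta$, up to sign. Since $\delta(t)=\delta(\omega^0 t)$ is one of the roots of $P$, we automatically have $P(\delta)=0$. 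The task therefore splits into showing $c_1=0$ and showing that every $c_j$ with $2\le j\le k$ is a real polynomial in $a_1,\dots,a_{k-1}$ and $t^k$ of the prescribed homogeneity and $t^k$-degree; setting $p_j$ to be the same polynomial with the formal substitutions $a_i\leftrightarrow t_i$ and $t^k\leftrightarrow t_k$ will then complete the proof.

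The vanishing $c_1=-\sum_j\delta(\omega^j t)=-\sum_i a_i t^i\bigl(\sum_j\omega^{ji}\bigr)=0$ follows immediately from $\sum_{j=0}^{k-1}\omega^{ji}=0$ for $1\le i\le k-1$. For $j\ge 2$, I would first observe that the substitution $t\mapsto \omega t$ cyclically permutes the roots $\delta(\omega^j t)$ and hence preserves each symmetric function $c_j$; writing $c_j=\sum_m b_m t^m$, the invariance $c_j(\omega t)=c_j(t)$ forces $b_m=0$ whenever $k\nmid m$, so $c_j$ is already a polynomial in $t^k$. To see that the coefficients $b_{mk}$ lie in $\mathbb{R}$ rather than $\mathbb{C}$, note that the Galois action $\omega\mapsto \omega^s$ with $\gcd(s,k)=1$ permutes $\{\omega^0,\dots,\omega^{k-1}\}$ and therefore the set of roots of $P$, fixing each $c_j$; thus $c_j\in\mathbb{Q}[a_1,\dots,a_{k-1}][t^k]$. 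Homogeneity of degree $j$ in $(a_1,\dots,a_{k-1})$ is immediate from the product form of $c_j$, since each factor $\delta(\omega^j t)$ is linear in the $a_i$.

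Finally, since each $\delta(\omega^j t)$ has $t$-degree at most $k-1$, the symmetric function $c_j$ has total $t$-degree at most $j(k-1)$, and therefore its $t^k$-degree is at most $\lfloor j(k-1)/k\rfloor=j-1$ for every $2\le j\le k$. Substituting $a_i\to t_i$ and $t^k\to t_k$ in $c_j$ yields polynomials $p_j$ with the required properties, and the identity $P(\delta)=0$ then reads $\delta^k+\sum_{j=2}^{k}p_j(a_1,\dots,a_{k-1},t^k)\,\delta^{k-j}=0$.

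I expect the main obstacle to be the mild bookkeeping needed to establish (i) the descent of $c_j$ from $\mathbb{C}[a_1,\dots,a_{k-1},t]$ down to $\mathbb{R}[a_1,\dots,a_{k-1},t^k]$, and (ii) the sharp conversion of the crude total $t$-degree bound $j(k-1)$ into the prescribed $t_k$-degree bound $j-1$. Both follow cleanly from the cyclic and Galois symmetries of the configuration $\{\omega^j t\}_{j=0}^{k-1}$, and primality of $k$ is not strictly needed for the construction itself, although it is natural in the intended applications of the lemma.
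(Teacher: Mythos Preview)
Your argument is correct and follows essentially the same route as the paper: both form the resolvent $P(X)=\prod_{m=0}^{k-1}\bigl(X-\delta(\omega^m t)\bigr)$, use the orthogonality $\sum_m\omega^{m\ell}=0$ for $k\nmid\ell$ to see that only powers of $t^k$ survive (you phrase this as invariance under $t\mapsto\omega t$, the paper as the averaging $P_t=\frac1k\sum_m P_{\omega^m t}$), and deduce reality from a conjugation symmetry (complex conjugation in the paper, full Galois descent in your version). The only cosmetic difference is that the paper fixes an arbitrary $k$th root $z\neq1$ and tacitly uses primality to ensure it is primitive, whereas you take $\omega=e^{2\pi i/k}$ directly and correctly note that primality is inessential for the construction.
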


	The proof of Lemma \ref{delta-sum-t^j} is given in Appendix \ref{appx:A}.
	\begin{lemma}\label{homogeneous}
		Suppose $k$ is a prime. Then for any positive integer $n$, there exist polynomials
		\[
		\left\{H_{n,j}(t_1,t_2,\ldots,t_n): j=0,1,2,\ldots,k^{n-1}\right\}
		\]
		with real coefficients satisfying
		\begin{enumerate}
			\item $H_{n,0}(t_1,t_2,\ldots,t_n)=1, ~H_{n,j}(t_1,t_2,\ldots,t_n)~(j=1,2,\ldots,k^{n-1})$ are homogeneous:
			\begin{align*}
			H_{n,j}(\lambda t_1,\lambda t_2,\ldots,\lambda t_n)=\lambda^j H_{n,j}(t_1,t_2,\ldots,t_n) ~\forall \lambda\in \mathbb{R},
			\end{align*}
			and $(-1)^{k^{n-1}}H_{n,k^{n-1}}(t_1,t_2,\ldots,t_n)$ is a monic polynomial of degree $k^{n-1}$ with respect to each variable $t_l(l=1,2,\ldots,n)$;
			
			\item if $\delta=\displaystyle\sum_{j=1}^n t_j$, then
			\[
			\sum_{j=0}^{k^{n-1}}H_{n,j}(t_1^k,t_2^k,\ldots,t_n^k)\delta^{k(k^{n-1}-j)}=0.
			\]
		\end{enumerate}
	\end{lemma}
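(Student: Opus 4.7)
The plan is to construct the polynomials $H_{n,j}$ explicitly as the coefficients of the monic polynomial that $\delta^k$ satisfies over the subring $\mathbb{R}[t_1^k,\ldots,t_n^k]$, obtained by taking a product over the orbit of $\delta^k$ under a group of twists by $k$-th roots of unity. This approach goes through a Galois-type construction and does not rely on Lemma~\ref{delta-sum-t^j} directly.

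First I would fix a primitive $k$-th root of unity $\zeta\in\mathbb{C}$ and let the group $G=(\mathbb{Z}/k\mathbb{Z})^n$ act on $\mathbb{C}[t_1,\ldots,t_n]$ by $(a_1,\ldots,a_n)\cdot t_i=\zeta^{a_i}t_i$, so that the invariant subring is $\mathbb{C}[t_1^k,\ldots,t_n^k]$. Since the $t_i$ are algebraically independent, the identity $(\sum_i\zeta^{a_i}t_i)^k=(\sum_i t_i)^k$ forces, by unique factorization, $\sum_i\zeta^{a_i}t_i=\omega\sum_i t_i$ for some $k$-th root of unity $\omega$, whence $\zeta^{a_1}=\cdots=\zeta^{a_n}=\omega$; so the stabilizer of $\delta^k$ is precisely the diagonal subgroup $\Delta\cong\mathbb{Z}/k$ and the orbit $\{\delta_1^k,\ldots,\delta_{k^{n-1}}^k\}$ has size $k^{n-1}$. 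I would then define
\[
P(u)=\prod_{l=1}^{k^{n-1}}(u-\delta_l^k),
\]
which is monic of degree $k^{n-1}$ in $u$; its coefficients are elementary symmetric functions of the orbit, hence $G$-invariant, so they lie in $\mathbb{C}[t_1^k,\ldots,t_n^k]$, and being fixed by complex conjugation (which sends $\zeta$ to $\zeta^{-1}$ and therefore permutes the orbit) they are in fact real. Writing $P(u)=\sum_{j=0}^{k^{n-1}}c_j u^{k^{n-1}-j}$ with $c_0=1$ and setting $c_j=H_{n,j}(t_1^k,\ldots,t_n^k)$, the identity $P(\delta^k)=0$ is exactly the second conclusion of the lemma.

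To verify the structural conditions, $H_{n,0}=1$ is immediate; homogeneity follows because the uniform rescaling $t_i\mapsto\mu t_i$ sends every orbit element to its $\mu^k$-multiple, giving $c_j\mapsto\mu^{jk}c_j$, which in the variables $s_i=t_i^k\mapsto\mu^k s_i$ reads $H_{n,j}(\lambda s_1,\ldots,\lambda s_n)=\lambda^j H_{n,j}(s_1,\ldots,s_n)$ for all $\lambda=\mu^k$, and this extends to all real $\lambda$ by polynomial identity. For the last property I would compute $(-1)^{k^{n-1}}c_{k^{n-1}}=\prod_l\delta_l^k=(\prod_l\delta_l)^k$; since $\prod_l\delta_l$ has degree $k^{n-1}$ in each $t_m$ with leading coefficient the $k$-th root of unity $\prod_l\zeta^{a_m^{(l)}}$, raising to the $k$-th power turns this leading coefficient into $1$, so $(\prod_l\delta_l)^k$ is monic of degree $k^n$ in each $t_m$, equivalently monic of degree $k^{n-1}$ in each $s_m=t_m^k$.

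The main technical obstacle is the descent from $\mathbb{C}$ to $\mathbb{R}$ together with the leading-coefficient book-keeping: the crucial observation is that $\zeta^k=1$, which collapses all the root-of-unity ambiguities so as to produce both real coefficients and the monic normalization in each variable simultaneously. An alternative, purely real, inductive approach would apply Lemma~\ref{delta-sum-t^j} with $t=t_n$ and $a_i=\binom{k}{i}(\delta')^{k-i}$ to the auxiliary quantity $\hat\delta=\delta^k-(\delta')^k-t_n^k=\sum_{i=1}^{k-1}\binom{k}{i}(\delta')^{k-i}t_n^i$ (where $\delta'=t_1+\cdots+t_{n-1}$) and then eliminate $\delta'$ using the inductive hypothesis via a resultant calculation; the Galois construction above is cleaner because it delivers the polynomial of exactly the required degree $k^{n-1}$ in a single step.
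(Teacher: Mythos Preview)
Your argument is correct and genuinely different from the paper's. The paper proceeds by induction on $n$: writing $\sum_{j=1}^{n+1}t_j=\delta$ and applying the inductive hypothesis to $\delta-t_{n+1}=\sum_{j=1}^n t_j$, one expands each power $(\delta-t_{n+1})^{k(k^{n-1}-j)}$ by the binomial theorem, separates the terms according to the residue of the $t_{n+1}$-exponent modulo $k$, and then invokes Lemma~\ref{delta-sum-t^j} (which is where primality of $k$ is actually used) to produce a polynomial relation involving only $t_{n+1}^k$ and $\delta^k$. Your orbit construction $P(u)=\prod_{g\in G/\Delta}(u-(g\cdot\delta)^k)$ bypasses both the induction and Lemma~\ref{delta-sum-t^j} entirely; it delivers the degree-$k^{n-1}$ relation in one stroke, the invariance under $G=(\mathbb{Z}/k\mathbb{Z})^n$ and under complex conjugation give the descent to $\mathbb{R}[t_1^k,\ldots,t_n^k]$ directly, and the leading-coefficient computation for $\prod_l\delta_l^k$ is clean because the $k$-th power kills all root-of-unity factors. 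A pleasant byproduct is that your argument nowhere uses that $k$ is prime, so you in fact prove the lemma for every integer $k\ge 2$ (the paper reaches composite $k$ only afterwards, in Proposition~\ref{k-th}, by chaining the prime case). The trade-off is that the paper's route is entirely elementary and self-contained over $\mathbb{R}$, whereas yours passes through $\mathbb{C}$ and uses the identification of the invariant ring of the diagonal $\mu_k^n$-action; for the purposes of the paper either is perfectly adequate, and your version is arguably more transparent about why the degree is exactly $k^{n-1}$.
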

	\begin{proof}
		We prove the conclusion by induction on $n$. Obviously, Lemma \ref{homogeneous} is true when $n=1$.
		Assume Lemma \ref{homogeneous} is true for $n\ge 1$. We show that Lemma \ref{homogeneous} is true for $n+1$. Let
		\[
		\sum_{j=1}^{n+1}t_j=\delta.
		\]
		It follows from the induction hypothesis and
		\[
		\sum_{j=1}^{n}t_j=\delta-t_{n+1}
		\]
		that there exist polynomials
		\[
		\{H_{n,j}(s_1,s_2,\ldots,s_n): j=0,1,2,\ldots,k^{n-1}\}
		\]
		with real coefficients satisfying that $H_{n,j}(s_1,s_2,\ldots,s_n)(j=0,1,2,\ldots,k^{n-1})$ are homogeneous, 
		$(-1)^{k^{n-1}} H_{n,k^{n-1}}(s_1,s_2,\ldots,s_n)$ is a monic polynomial of degree $k^{n-1}$ with respect to each variable $s_l(l=1,2,\cdots,n)$, and
		\[
		\sum_{j=0}^{k^{n-1}}H_{n,j}(t_1^k,t_2^k,\ldots,t_n^k)(\delta-t_{n+1})^{k(k^{n-1}-j)}=0.
		\]
		We obtain from Newton binomial theory that
		\[
		\sum_{i=1}^{k-1}a_i\delta^k\left(-\frac{t_{n+1}}{\delta}\right)^i=a,
		\]
		where
		\begin{align*}
		a&=-H_{n,k^{n-1}}(t_1^k,\ldots,t_n^k)-\delta^{k^n}-(-t_{n+1})^{k^n}-\sum_{\ell=1}^{k^{n-1}-1}\binom{k^n}{k\ell}\delta^{k^n-k\ell}(-t_{n+1})^{k\ell}\\
		&\quad -\sum_{j=1}^{k^{n-1}-1}H_{n,j}(t_1^k,\ldots,t_n^k)\sum_{\ell=0}^{k^{n-1}-j}\binom{k^n-kj}{k\ell}\delta^{k^n-kj-k\ell}(-t_{n+1})^{k\ell},\\
		a_i&=\sum_{j=0}^{k^{n-1}-1}H_{n,j}(t_1^k,\ldots,t_n^k)\sum_{\ell=1}^{k^{n-1}-j}\binom{k^n-kj}{k\ell-k+i}\delta^{k^n-kj-k\ell}(-t_{n+1})^{k\ell-k},\\
		&\quad i=1,2,\ldots,k-1.
		\end{align*}
		Since $k$ is a prime, there exist $\{p_j(t_1,t_2,\ldots,t_{k}): j=2,3,\ldots,k\}$ satisfying Lemma \ref{delta-sum-t^j}, namely,
		\begin{equation*}
		0=a^k+\sum_{j=2}^kp_j(a_1\delta^k,a_2\delta^k,\ldots,a_{k-1}\delta^k,(-t_{n+1})^k/\delta^k)a^{k-j},
		\end{equation*}
		or
		\begin{equation*}
		0=a^k+\sum_{j=2}^k p_j(a_1,a_2,\ldots,a_{k-1},(-t_{n+1})^k/\delta^k)\delta^{kj} a^{k-j}.
		\end{equation*}
		We conclude that Lemma \ref{homogeneous} is true when $n$ is replaced by $n+1$. This completes the proof.
	\end{proof}
	
	Since every integer greater than $1$ can be written as a product of one or more primes, we arrive at
	\begin{proposition}\label{k-th}
		 Let $k$ and $n$ be two positive integers. Then there exists a homogeneous polynomial $P(t_1,t_2,\ldots,t_{n+1})$ with real coefficients satisfying
		 \begin{enumerate}
		 	\item the degree of $P$ with respect to each variable is the same, and $P$ is a monic polynomial with respect to $t_{n+1}$;
		 	\item if $\delta=\displaystyle\sum_{j=1}^n t_j$, then
		 	\[
		 	P(t_1^k,t_2^k,\ldots,t_n^k,\delta^k)=0.
		 	\]
		 \end{enumerate}
	\end{proposition}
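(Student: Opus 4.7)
Plan: I would prove the proposition by induction on the number of prime factors of $k$ (counted with multiplicity), using Lemma \ref{homogeneous} as the engine at each step. For $k=1$ set $P(t_1,\ldots,t_{n+1}) := t_{n+1} - (t_1+\cdots+t_n)$, which obviously meets both conditions. For $k$ prime I would package Lemma \ref{homogeneous} directly as
\[
P(t_1,\ldots,t_{n+1}) := \sum_{j=0}^{k^{n-1}} H_{n,j}(t_1,\ldots,t_n)\, t_{n+1}^{k^{n-1}-j},
\]
whose structural conditions all follow from Lemma \ref{homogeneous}: total degree $k^{n-1}$ from the homogeneity of $H_{n,j}$; monicity in $t_{n+1}$ from $H_{n,0}=1$; uniform degree $k^{n-1}$ in each of $t_1,\ldots,t_n$ from the monic leading behavior of $(-1)^{k^{n-1}} H_{n,k^{n-1}}$ in each variable; and vanishing at $(t_1^k,\ldots,t_n^k,\delta^k)$ from Lemma \ref{homogeneous}(2).

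For the inductive step, factor $k = k' p$ with $p$ prime and $k'<k$; the inductive hypothesis provides $P_{k'}$ realising the claim for the exponent $k'$. Setting $u_j := t_j^{k'}$ and $w := \delta^{k'}$, we have $P_{k'}(u,w)=0$ identically in $t$; since $u_j^p = t_j^k$ and $w^p = \delta^k$, the task reduces to constructing, from $P_{k'}$, a polynomial $P_k$ of the required form vanishing at $(u_1^p,\ldots,u_n^p,w^p)$. I would accomplish this by the norm construction under the diagonal action of $p$-th roots of unity on each variable,
\[
P_k(y_1,\ldots,y_n,z) := \prod_{\zeta_1^p = \cdots = \zeta_n^p = \eta^p = 1} P_{k'}\bigl(\zeta_1 y_1^{1/p},\ldots,\zeta_n y_n^{1/p},\eta z^{1/p}\bigr),
\]
which is a genuine polynomial in $(y,z)$ by invariance under the $\mu_p$-actions; the factor corresponding to $\zeta_j=\eta=1$ evaluates to $P_{k'}(u,w)=0$ upon substitution, yielding $P_k(u^p,w^p)=0$; and the structural properties of $P_{k'}$ transfer to $P_k$ through this symmetric product.

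The main obstacle is ensuring the structural conditions -- especially the uniform-degree requirement and monicity in $t_{n+1}$ -- survive the inductive step. Homogeneity propagates transparently, but matching $\deg_z P_k = \deg_{y_j} P_k$ needs careful accounting for the Galois-redundant factors, i.e., those related by the diagonal $\mu_p$-action that rescales $(\zeta_1,\ldots,\zeta_n,\eta)$ by a common root of unity; this effectively collapses the $p^{n+1}$-fold product to a $p^n$-fold one whose degrees scale correctly from $(k')^{n-1}$ to $k^{n-1}$. A cleaner alternative plan, bypassing Galois language entirely, is to invoke Cayley--Hamilton: the ring $\mathbb{R}[t_1,\ldots,t_n]$ is a free module of rank $k^n$ over $\mathbb{R}[t_1^k,\ldots,t_n^k]$, and the characteristic polynomial of multiplication by $\delta^k$ is automatically monic of degree $k^n$ with coefficients in $\mathbb{R}[t_1^k,\ldots,t_n^k]$ and vanishes at $\delta^k$; the homogeneity and uniform-degree conditions then follow from a natural grading and from the symmetry of the basis $\{t^\beta: 0\le\beta_j<k\}$ under permutations of the $t_j$.
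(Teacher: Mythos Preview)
Your overall strategy---handle $k=1$ separately, package Lemma~\ref{homogeneous} for prime $k$, and induct on the number of prime factors of $k$---is exactly what the paper intends; the paper's entire proof is the single sentence ``Since every integer greater than $1$ can be written as a product of one or more primes, we arrive at'' the proposition. So you are supplying details the paper omits.

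There is one misstep in your inductive step. The full $p^{n+1}$-fold norm
\[
\widetilde P_k(y,z)=\prod_{\zeta\in\mu_p^{\,n},\ \eta\in\mu_p} P_{k'}\bigl(\zeta_1 y_1^{1/p},\ldots,\zeta_n y_n^{1/p},\eta z^{1/p}\bigr)
\]
already has all the required properties. It is a real polynomial in $(y,z)$ by $\mu_p^{\,n+1}$-invariance and conjugation symmetry; it is homogeneous of degree $dp^n$ where $d=\deg P_{k'}$; since each factor is monic of degree $d$ in its last argument, the leading coefficient in $z$ is $\bigl(\prod_{\eta\in\mu_p}\eta^d\bigr)^{p^n}=(-1)^{d(p-1)p^n}=1$ (using $n\ge 1$), so $\widetilde P_k$ is monic in $z$; and the same count, together with the fact that the $s_1^{\,d}$-coefficient of the homogeneous polynomial $P_{k'}$ is a nonzero constant, gives degree $dp^n$ in each $y_j$. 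The proposition only asks that all variable-degrees be \emph{equal}, not that they equal $k^{n-1}$, so your ``collapse to $p^n$ factors via the diagonal $\mu_p$-action'' is both unnecessary and problematic: the product over orbit representatives is not invariant under the individual $\mu_p$-actions, hence not manifestly a polynomial in $(y,z)$. Drop the collapsing and keep the full product.

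Your Cayley--Hamilton alternative is a genuine shortcut that the paper does not take: it handles all $k$ at once without iterating over prime factors. The one point that needs an argument beyond ``symmetry'' is that the degree in each $s_l$ equals the degree $k^n$ in $z$; this follows because the constant term $c_{k^n}=\pm\det(\text{mult.\ by }\delta^k)=\pm\prod_{\zeta\in\mu_k^{\,n}}(\zeta_1 t_1+\cdots+\zeta_n t_n)^k$ specializes to $\pm s_1^{\,k^n}$ at $s_2=\cdots=s_n=0$, hence has full degree $k^n$ in $s_1$ and, by symmetry, in every $s_l$.
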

	
	We mention that the coefficients of the polynomial in Proposition \ref{k-th} can be integers and there exists a real homogeneous polynomial $P$ such that any zero of
		\[
		\sum_{i=1}^n\sqrt[k]{t_i}=\delta
		\]
		is an zero of
		\[
		P(t_1,t_2,\ldots,t_n,\delta^k)=0.
		\]
	
	\begin{proposition}\label{prop:pnonzero]}
		If $p\in\mathcal{P}_{\mathbb{Q}_+}(\mathbb{R}^3)$ and $\deg p > 0$, then for any open set $G\subset\mathbb{R}^3$, there exists $x_0\in G$ such that $p(x_0)\ne 0$.
	\end{proposition}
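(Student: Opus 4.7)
The plan is to reduce the statement to the classical fact that an ordinary real polynomial cannot vanish on a non-empty open subset of $\mathbb{R}^3$ without being identically zero.  Write $p = \sum_{i=1}^{m} a_{i} x^{\alpha_{i}}$ as a finite sum with $a_{i}\ne 0$ and distinct exponents $\alpha_{i}=(\alpha_{i,1},\alpha_{i,2},\alpha_{i,3})\in\mathbb{Q}_{+}^{3}$, and let $k$ be the least common multiple of the denominators of all the $\alpha_{i,j}$, so that $k\alpha_{i}\in\mathbb{Z}_{+}^{3}$ for every $i$.

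Next, since $G$ is open and non-empty, I would pick any small open subset $U\subseteq G$ contained in one open octant of $\mathbb{R}^{3}$; after reflecting each coordinate if necessary (which only changes the signs of $a_{i}$, leaving monomials distinct) we may assume $U\subset (0,\infty)^{3}$, a region on which $x^{\alpha}$ is unambiguously defined for every $\alpha\in\mathbb{Q}_{+}^{3}$. The map $\Psi:(0,\infty)^{3}\to(0,\infty)^{3}$ given by $\Psi(y)=(y_{1}^{k},y_{2}^{k},y_{3}^{k})$ is a smooth diffeomorphism, so $U':=\Psi^{-1}(U)$ is a non-empty open subset of $(0,\infty)^{3}$. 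On $U'$ the composition
\[
P(y):=p(\Psi(y))=\sum_{i=1}^{m} a_{i}\, y^{k\alpha_{i}}
\]
is an \emph{ordinary} real polynomial in $y$, because each $k\alpha_{i}$ has non-negative integer components.

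Now I argue by contradiction: if $p\equiv 0$ on $G$, then $P\equiv 0$ on the non-empty open set $U'$, hence (by the classical identity theorem for multivariate real polynomials) $P\equiv 0$ on all of $\mathbb{R}^{3}$. Since the exponents $k\alpha_{i}$ are pairwise distinct (the $\alpha_{i}$ are distinct and $k$ is a common multiplier), the monomials $y^{k\alpha_{i}}$ are linearly independent, and so every $a_{i}=0$. This forces $p$ to be the zero function, contradicting $\deg p>0$.

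The main technical point I expect to have to justify carefully is the initial reduction to an octant: the set $\mathcal{P}_{\mathbb{Q}_{+}}(\mathbb{R}^{3})$ allows rational exponents whose denominators may be even, so $p$ is naturally a real function only on the positive octant (up to coordinate reflections). Once that is granted, the change of variables $x_{j}=y_{j}^{k}$ clears denominators cleanly and the remainder of the proof is a routine appeal to the standard fact that a polynomial vanishing on an open set vanishes identically.
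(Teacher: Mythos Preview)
Your argument is correct and takes a genuinely different route from the paper's.

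The paper proves Proposition~\ref{prop:pnonzero]} by invoking the algebraic machinery of Proposition~\ref{k-th} (built on Lemmas~\ref{delta-sum-t^j} and~\ref{homogeneous}): assuming $p=\sum_i a_{\alpha^{(i)}}x^{\alpha^{(i)}}$ vanishes on $G$, it applies the homogeneous relation $P(t_1^k,\dots,t_n^k,\delta^k)=0$ with $t_i=a_{\alpha^{(i)}}x^{\alpha^{(i)}}$ and $\delta=p=0$ to produce an \emph{ordinary} polynomial $Q(x)=P\big(a_1^kx^{k\alpha^{(1)}},\dots,a_n^kx^{k\alpha^{(n)}},0\big)$ that still vanishes on $G$; the structure of $P$ (homogeneous, monic in the last slot, equal degree in each slot) together with the maximality of $\alpha^{(n)}$ forces $\deg Q>0$, contradicting the classical identity theorem. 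Your proof skips all of this machinery: the substitution $x_j=y_j^k$ on an octant turns $p$ \emph{itself} into an ordinary polynomial $P(y)=\sum_i a_i\,y^{k\alpha_i}$ with distinct integer multi-exponents, and the identity theorem applies directly.

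What each approach buys: yours is shorter, self-contained, and makes the mechanism transparent---clearing denominators in the exponents is really all that is going on. The paper's route reuses Proposition~\ref{k-th}, which is developed anyway for the harder Lemma~\ref{lem:pisumlnsum}; it also has the cosmetic advantage that the final contradictory object $Q$ lives on all of $\mathbb{R}^3$ rather than on an octant, although, as you correctly flag, both proofs ultimately need the fractional powers $x^{\alpha^{(i)}}$ to be real on the set in question, so the octant issue is present in the paper's argument too (just not made explicit). Your remark that the reflection merely changes the signs of the coefficients while keeping the monomials distinct is the right observation to close that gap.
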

	\begin{proof}
	We see from the definition of $\mathcal{P}_{\mathbb{Q}_+}(\mathbb{R}^3)$ that
		\[
		p = \sum_{i=1}^n a_{\alpha^{(i)}} x^{\alpha^{(i)}},
		\]
		where $\alpha^{(i)}\in\mathbb{Q}_+^3$, $a_{\alpha^{(i)}}\in\mathbb{R}$, and $\alpha^{(n)}$ is the max index. Hence we can choose positive integer $k$ such that all components of $k\alpha^{(i)}(i=1,2,\ldots,n)$ are integers.
		
		Assume $p = 0$ in $G$. Then there exists a homogeneous polynomial $P(t_1,t_2,\ldots,t_n)$ satisfying the conclusion of Proposition \ref{k-th} and
		\[
		P(x^{k\alpha^{(1)}}, x^{k\alpha^{(2)}},\ldots, x^{k\alpha^{(n)}}) = 0,\quad\text{in }G.
		\]
		Set $Q = P(x^{k\alpha^{(1)}}, x^{k\alpha^{(2)}},\ldots, x^{k\alpha^{(n)}})$. Then $Q$ is a polynomial with positive which is a contradiction to $Q = 0$. This completes the proof.
	\end{proof}
	
	\begin{lemma}\label{lem:pisumlnsum}
		Let $n$ be an positive integer and $G$ be an open subset of $\mathbb{R}^3$. Let $p_j\in\mathcal{P}_{\mathbb{Q}_+}^{1/k}(\mathbb{R}^3)$ for some positive integer $k$ and $q_j\in\mathcal{P}_{\mathbb{Q}_+}(\mathbb{R}^3)(j=1,2,\ldots,n)$. If $q_j > 0(j=1,2,\ldots,n)$ in $G$ with $q_{j_0}\neq 1$ in $G$ for some $j_0$ and
		\[
		\deg p_{j_0} > \deg p_j \quad \forall j\in\{1,2,\ldots,n\}\setminus\{j_0\},
		\]
		then there exists $x_0\in G$ such that
		\[
		\sum_{j=1}^n p_{j}(x_0)\ln q_j(x_0)\ne 0.
		\]
	\end{lemma}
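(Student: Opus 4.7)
I would argue by contradiction, assuming $F(x)\coloneqq \sum_{j=1}^n p_j(x)\ln q_j(x)\equiv 0$ on $G$, and proceed by induction on $n$. The base case $n=1$ is immediate: the hypothesis forces $p_{j_0}\not\equiv 0$, so $r_{j_0}\coloneqq p_{j_0}^k$ and $q_{j_0}-1$ are both nonzero elements of $\mathcal{P}_{\mathbb{Q}_+}(\mathbb{R}^3)$, and Proposition \ref{prop:pnonzero]} applied to each produces a dense open subset of $G$ on which $p_{j_0}(x)\ln q_{j_0}(x)\ne 0$. For the inductive step I may first assume every $p_j\not\equiv 0$ and every $q_j\not\equiv 1$ on $G$, since trimming a trivial term yields a shorter identity falling under the inductive hypothesis.

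The inductive step rests on an elimination procedure that differentiates away one logarithm at a time. Pick some $j_1\ne j_0$; on the dense open set $\{p_{j_1}\ne 0\}$ I rewrite the identity as $\ln q_{j_1}=-\sum_{j\ne j_1}(p_j/p_{j_1})\ln q_j$, apply a partial derivative $\partial_i$, and multiply through by $p_{j_1}^2\prod_l q_l$ together with a polynomial factor such as $\prod_{j}p_j^{k-1}$ to clear the fractional-power denominators produced by $\partial_i p_j=\tfrac{1}{k}r_j^{1/k-1}\partial_i r_j$. The key identity
\[
(p_{j_1}p_j)^{k-1}\bigl(p_{j_1}\partial_i p_j-p_j\partial_i p_{j_1}\bigr)=\tfrac{1}{k}\bigl(r_{j_1}\partial_i r_j-r_j\partial_i r_{j_1}\bigr)
\]
keeps everything inside the $\mathcal{P}_{\mathbb{Q}_+}^{1/k}$-framework, yielding a reduced relation $\sum_{j\ne j_1}\tilde p_j\ln q_j=\tilde f$ with $n-1$ logarithms. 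Choosing $i$ so that $\partial_i(p_{j_0}/p_{j_1})\not\equiv 0$ (possible because $\deg p_{j_0}>\deg p_{j_1}$ makes the ratio non-constant), a direct degree computation gives $\deg\tilde p_{j_0}=\deg\prod_l q_l+\deg p_{j_0}+\deg p_{j_1}-1$, strictly larger than $\deg\tilde p_j$ for any $j\ne j_0,j_1$. Iterating the elimination and always removing some $j_k\ne j_0$, the strict degree dominance is preserved at every stage, and after $n-1$ iterations I arrive at a single-term identity $P\ln q_{j_0}=F$ with $P\not\equiv 0$ (of strictly positive degree by the preserved hypothesis, hence nonvanishing on a dense open subset of $G$ via Proposition \ref{prop:pnonzero]}) and $F$ algebraic over $\mathbb{R}(x_1,x_2,x_3)$.

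Consequently $\ln q_{j_0}=F/P$ is algebraic over $\mathbb{R}(x_1,x_2,x_3)$, contradicting the transcendence of $\ln q_{j_0}$ over this field when $q_{j_0}$ is a non-constant element of $\mathcal{P}_{\mathbb{Q}_+}$. For the remaining case where $q_{j_0}\equiv c>0$ is constant with $c\ne 1$, I would appeal directly to Proposition \ref{k-th}: setting $t_j=(\ln q_j)p_j$ in the polynomial identity $P(t_1^k,\ldots,t_n^k,\delta^k)=0$ with $\delta=F\equiv 0$ would force $H_{n,k^{n-1}}\bigl((\ln q_1)^k r_1,\ldots,(\ln q_n)^k r_n\bigr)\equiv 0$ on $G$; but $H_{n,k^{n-1}}$ is monic of degree $k^{n-1}$ in each variable, so the substituted expression has a unique top-degree monomial $\pm(\ln q_{j_0})^{k^n}r_{j_0}^{k^{n-1}}$ (nonzero since $\ln q_{j_0}\ne 0$ and $\deg r_{j_0}$ strictly exceeds every other $\deg r_j$), contradicting Proposition \ref{prop:pnonzero]}. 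The principal obstacle throughout is the algebraic bookkeeping in the iteration: verifying at each step that the polynomial clearing keeps every coefficient inside $\mathcal{P}_{\mathbb{Q}_+}^{1/k}$ (up to polynomial multiplication) and that the coefficient of $\ln q_{j_0}$ retains its strict degree dominance despite the accumulating Wronskian-type differences $p_{j_k}\partial_i p_j-p_j\partial_i p_{j_k}$.
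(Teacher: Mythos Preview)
Your overall strategy---differentiate to strip off logarithms one at a time and invoke Proposition \ref{k-th} for an algebraic contradiction---matches the paper's. However, your case split on whether $q_{j_0}$ is constant creates two genuine difficulties that the paper's organization avoids.

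In your case 1 ($q_{j_0}$ non-constant) you iterate down to $P\ln q_{j_0}=F$ and then invoke the transcendence of $\ln q_{j_0}$ over $\mathbb{R}(x_1,x_2,x_3)$. This fact is true, but it is not established anywhere in the paper and is not entirely trivial: the identity only holds on a bounded open set $G$, so you would need an analytic-continuation or asymptotic-growth argument to reach a contradiction.

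Your case 2 ($q_{j_0}$ constant) has a more concrete gap. You substitute $t_j=(\ln q_j)p_j$ into Proposition \ref{k-th} and want to apply Proposition \ref{prop:pnonzero]} to $H_{n,k^{n-1}}\bigl((\ln q_1)^k r_1,\ldots,(\ln q_n)^k r_n\bigr)$. But if some $q_j$ with $j\ne j_0$ is \emph{non-constant}, then $(\ln q_j)^k r_j\notin\mathcal{P}_{\mathbb{Q}_+}$ and Proposition \ref{prop:pnonzero]} does not apply. Your ``unique top-degree monomial'' claim tacitly treats each $\ln q_j(x)$ as a degree-zero constant, which again hides an unproved transcendence/growth fact.

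The paper circumvents both issues by inducting on $n_2$, the number of \emph{non-constant} $q_j$, rather than on $n$. The base case $n_2=0$ has every $\ln q_j$ a genuine scalar, so the substitution into Proposition \ref{k-th} lands in $\mathcal{P}_{\mathbb{Q}_+}$ and Proposition \ref{prop:pnonzero]} applies cleanly---this is exactly your case-2 argument, but only in the situation where it is valid. For the inductive step the paper always divides by the coefficient $p_{n_1+1}$ of a \emph{non-constant} $\ln q$ and differentiates, reducing $n_2$ by one. Two subcases are tracked: if the dominant index $j_0$ lies among the constant $q_j$, the Wronskian $\tilde p_{j_0}=p_{n_1+1}\partial_i p_{j_0}-p_{j_0}\partial_i p_{n_1+1}$ remains dominant; if $j_0=n_1+1$ is itself non-constant, it is the one eliminated, and the rational piece $\partial_i q_{j_0}/q_{j_0}$ (of ``degree'' $-1$) becomes the new dominant coefficient, now attached to a constant logarithm. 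Either way the induction hypothesis applies with one fewer non-constant $q$, and no transcendence input is needed.
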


	The proof of Lemma \ref{lem:pisumlnsum} is provided in Appendix \ref{appx:B}.
	
	
	\section{Behavior of eigenfunction}\label{sec:behaviour-eigenf}
	In this section, we investigate the non-polynomial behavior of eigenfunctions of \eqref{Orig-Eq}, which will be applied to analyze convergence of their so-called adaptive finite element approximations. We may refer to \cite{gong-shen-zhang-zhou08,zhang-zhou16} for the regularity behavior of eigenfunctions that indeed result in applying adaptive finite element computations.
	
	We first recall the unique continuation property.
	
	\begin{definition}
		Equation \eqref{Orig-Eq} has a unique continuation property if every solution in $H^2_{\text{loc}}(\Omega)$ that vanishes on an open set of $\Omega$ vanishes identically.
	\end{definition}

	To look into if \eqref{Orig-Eq} has a unique continuation property, we may apply the following conclusion, which can be found in \cite{wolff93}.
	\begin{lemma}\label{UCP}
		Assume $u\in H_{\text{loc}}^2(\Omega)$ and $W\in L_{\text{loc}}^{3/2}(\Omega)$ such that $|\Delta u|\le W|u|$. If $u$ vanishes on an open set of $\Omega$, then $u$ is identically zero on $\Omega$.
	\end{lemma}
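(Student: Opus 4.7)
The plan is to prove the lemma via a Carleman estimate together with a propagation-of-vanishing argument; this is the classical route going back to Carleman, sharpened in the critical-integrability regime by Jerison--Kenig and by Wolff. The hypothesis $W\in L^{3/2}_{\text{loc}}$ is exactly the threshold exponent $L^{n/2}$ for $n=3$, which forces the underlying Carleman inequality to be at the Sobolev dual pair $(L^{6/5},L^{6})$, namely
\begin{equation*}
\|e^{\tau\phi} v\|_{L^6(\mathbb{R}^3)} \le C \,\|e^{\tau\phi} \Delta v\|_{L^{6/5}(\mathbb{R}^3)} \qquad \forall\, v \in C_c^\infty(\mathbb{R}^3\setminus\{x_0\}),
\end{equation*}
with $C$ independent of $\tau$ along a sequence $\tau_j\to\infty$. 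First I would prove this inequality for a weight $\phi(x)$ comparable to $\log|x-x_0|$, perturbed so that the eigenvalues of the conjugated operator $e^{\tau\phi}\Delta e^{-\tau\phi}$ on each spherical-harmonic subspace stay uniformly bounded away from zero. After separation of variables the required estimate reduces to weighted bounds on Bessel-type resolvents, and the $L^{6/5}\to L^6$ mapping property at the critical exponent is ultimately underpinned by the Stein--Tomas Fourier restriction theorem for the sphere $S^2$.

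Second, I would use the inequality to propagate vanishing. Fix $x_0$ in the open subset of $\Omega$ where $u$ vanishes, and let $\chi$ be a smooth cutoff supported in a thin annulus $A=\{r_1<|x-x_0|<r_2\}$ with $\chi\equiv 1$ on the inner half. Applying the estimate to $v=\chi u$ and expanding $\Delta(\chi u)=\chi\Delta u+2\nabla\chi\cdot\nabla u+(\Delta\chi)u$, the principal term is controlled by $\|e^{\tau\phi}\chi W u\|_{L^{6/5}}$, which by H\"older with $\tfrac{5}{6}=\tfrac{2}{3}+\tfrac{1}{6}$ is at most $\|W\|_{L^{3/2}(A)}\,\|e^{\tau\phi}\chi u\|_{L^{6}}$. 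Shrinking $r_2-r_1$ makes $\|W\|_{L^{3/2}(A)}$ small enough (by absolute continuity of the $L^{3/2}$-norm) to absorb into the left-hand side. The remaining commutator terms are supported on $\{\chi'\neq 0\}$, part of which lies in the region where $u$ already vanishes; letting $\tau_j\to\infty$ and exploiting that $e^{\tau\phi}$ is largest on the inner boundary of $A$ (where $u=0$) forces $u$ to vanish on a neighborhood of $x_0$ of strictly larger radius. A standard iteration along paths combined with the connectedness of $\Omega$ then extends the vanishing set to all of $\Omega$.

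The main obstacle is the Carleman inequality itself. Obtaining a constant $C$ uniform in $\tau$ at the critical exponent pair $(6/5,6)$ is delicate: one must avoid an arithmetic exceptional set of $\tau$ where the conjugated operator acquires small eigenvalues, and crucially invoke sphere-restriction bounds rather than straightforward Sobolev embedding. Once this inequality is secured, the absorption-plus-propagation step that extracts unique continuation is comparatively routine.
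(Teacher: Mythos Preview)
The paper does not actually prove this lemma: immediately before the statement it writes ``we may apply the following conclusion, which can be found in \cite{wolff93}'' and then states the lemma without proof. So there is no in-paper argument to compare against; the lemma is quoted from the literature.

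Your proposal is, up to minor details, exactly the Jerison--Kenig/Wolff proof that the paper is citing, so in that sense you are reproducing the intended argument rather than offering an alternative. Two small points are worth tightening. First, the sign of the weight: you write $\phi(x)$ ``comparable to $\log|x-x_0|$'', but for the propagation step you need $e^{\tau\phi}$ to be largest near $x_0$ (where $u$ already vanishes) and smallest on the outer part of the annulus; this forces $\phi$ to be essentially $-\log|x-x_0|$ (or a convex perturbation thereof), not $+\log|x-x_0|$. Second, your cutoff description is slightly off: one normally takes $\chi\equiv 1$ on a ball $\{|x-x_0|<r_1'\}$ that strictly contains the known vanishing ball, and lets $\chi$ drop to zero on $\{r_1'<|x-x_0|<r_2\}$; then the commutator terms live only in the outer shell, where the weight is smallest, and sending $\tau\to\infty$ kills them relative to the left-hand side on the intermediate region. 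With these corrections the absorption-and-iteration argument goes through as you describe.
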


	\begin{theorem}\label{UCP-NonLin-Loc-NonLoc}
		If $V\in L^2(\Omega)$ and $\mathcal{N}_1(t)\in\mathscr{P}(s,(c_1,c_2)$ with $s\in[0,3/2]$, then \eqref{Orig-Eq} has a unique continuation property.
	\end{theorem}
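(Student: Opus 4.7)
The plan is to invoke Wolff's unique continuation lemma (Lemma~\ref{UCP}) directly on each eigenfunction $\phi_i$. Rewriting \eqref{Orig-Eq} as
\begin{equation*}
-\kappa\Delta\phi_i = \bigl(\lambda_i - V - \mathcal{N}_1(\rho) - \mathcal{N}_2(\rho)\bigr)\phi_i,
\end{equation*}
we obtain the pointwise bound $|\Delta\phi_i|\le W|\phi_i|$ with $W = \kappa^{-1}|\lambda_i - V - \mathcal{N}_1(\rho) - \mathcal{N}_2(\rho)|$. The task then reduces to verifying the two hypotheses of Lemma~\ref{UCP}: $\phi_i\in H^2_{\text{loc}}(\Omega)$ and $W\in L^{3/2}_{\text{loc}}(\Omega)$.

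For the interior $H^2$ regularity, I would rely on a standard elliptic bootstrap. Since $\phi_i\in H_0^1(\Omega)\hookrightarrow L^6(\Omega)$, one has $\rho\in L^3(\Omega)$; combined with $V\in L^2(\Omega)$, the polynomial growth of $\mathcal{N}_1$, and the boundedness of the Coulomb potential $\mathcal{N}_2(\rho)$ (established below), this places the right-hand side of the Poisson equation for $\phi_i$ in a sufficiently strong Lebesgue class for interior elliptic regularity to promote $\phi_i$ into $H^2_{\text{loc}}(\Omega)$.

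For the integrability of $W$, I would treat each term separately. The constant $\lambda_i$ is trivially in $L^{3/2}_{\text{loc}}$, and boundedness of $\Omega$ gives $V\in L^2(\Omega)\subset L^{3/2}(\Omega)$. For $\mathcal{N}_1(\rho)$, the growth hypothesis $\mathcal{N}_1\in\mathscr{P}(s,(c_1,c_2))$ yields the pointwise bound $|\mathcal{N}_1(\rho)|\lesssim \rho^{s}+1$; since $\rho\in L^3(\Omega)$ we have $\rho^{s}\in L^{3/s}(\Omega)$, and the restriction $s\le 3/2$ ensures $3/s\ge 2$, so $\rho^{s}\in L^{3/2}(\Omega)$. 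For the nonlocal term, Hölder's inequality with $\rho\in L^3(\Omega)$ and $|x-\cdot|^{-1}\in L^{q}(\Omega)$ for any $q<3$ yields $\mathcal{N}_2(\rho)\in L^\infty(\Omega)$. Summing these contributions puts $W$ in $L^{3/2}_{\text{loc}}(\Omega)$, and Lemma~\ref{UCP} then forces any $\phi_i$ vanishing on an open subset to vanish identically.

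The main obstacle I anticipate is the critical case $s=3/2$, which exactly saturates the $L^{3/2}$ threshold demanded by Lemma~\ref{UCP}; a slightly larger growth exponent would prevent the argument from closing and explains why the upper bound $s\le 3/2$ is imposed in the hypothesis. A secondary but more routine technical point is the bootstrap establishing $H^2_{\text{loc}}$ regularity with only $V\in L^2(\Omega)$: one cannot directly conclude that the right-hand side of the equation lies in $L^2_{\text{loc}}$, so one has to use Hölder estimates to place it in a weaker $L^p_{\text{loc}}$ space first before iterating up to $H^2_{\text{loc}}$.
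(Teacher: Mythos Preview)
Your proposal is correct and follows essentially the same route as the paper: rewrite \eqref{Orig-Eq} as $|\Delta\phi_i|\le W_i|\phi_i|$, verify $\phi_i\in H^2_{\text{loc}}(\Omega)$ and $W_i\in L^{3/2}_{\text{loc}}(\Omega)$, and invoke Lemma~\ref{UCP}. The only cosmetic difference is that the paper cites regularity results to obtain $\phi_i\in H^2(\Omega)\hookrightarrow C(\bar\Omega)$ directly (so $\rho$ is bounded and the $\mathcal{N}_1(\rho)$ estimate is immediate), whereas you sketch an interior bootstrap and use $\rho\in L^3$ together with the growth bound $s\le 3/2$ to reach the same conclusion.
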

	
	\begin{proof}
		It follows from \cite{chen09,dauge88} that $\phi_i\in H^2(\Omega)$, which together with Sobolev imbedding theorem leads to $\phi_i\in C({\bar \Omega})~(i=1,2,\ldots,N)$.
		
	 Not that Young's inequality and Sobolev imbedding theorem imply
	\[
		\|\mathcal{N}_2(\rho)\|_{0,\infty,\Omega}\le C\|\rho\|_{0,\Omega}\le C\sum_{i=1}^N \|\phi_i\|_{0,4,\Omega}^2\le C\|\Phi\|_{1,\Omega}^2<\infty.
	\]
	We have that $|\Delta \phi_i|=W_i|\phi_i|$	and $W_i\in L_{\text{loc}}^{3/2}(\Omega)$, where $W_i=|V+\mathcal{N}(\rho)-\lambda_i|/\kappa~(i=1,2,\ldots,N)$. Thus we arrive at the conclusion by using Lemma \ref{UCP}.
	\end{proof}
	
	\begin{remark}
		We may see from the proof of Theorem \ref{UCP-NonLin-Loc-NonLoc} that if $V\in L^2(\Omega)$ is replaced by $V\in L^{3/2}_{\textup{loc}}(\Omega)$ and any solution of $\eqref{Orig-Eq}$ is in $H^2(\Omega)$, then $\eqref{Orig-Eq}$ has a unique continuation property.
	\end{remark}
	
	\begin{theorem}\label{thm:nonpoly-K=0}
		Let $V$ and $\mathcal{N}$ be defined by \eqref{V-nonlin-nonloc} and \eqref{N-nonlin-nonloc}-\eqref{N2} with $\alpha=0$, respectively. If $V$ is a non-constant function and
		{\small
		\begin{equation}\label{eq:degmax}
		\deg p_1 - \deg h_1>\displaystyle\max\left\{0,~\max_{1\le j\le M}(\deg f_j-\deg g_j)/2,~\max_{2\le i\le K}(\deg p_i-\deg h_i)\right\},
		\end{equation}
		}then for any solution $\Phi=(\phi_1,\phi_2,\ldots,\phi_N)$ of \eqref{Orig-Eq}, there exists an eigenfunction $\phi_j(j\in\{1,2,\cdots,N\})$ being not a non-zero polynomial on any open set $G\subset \Omega$. If in addition, $V\in L^2(\Omega)$ and $\mathcal{N}_1(t)\in\mathscr{P}(s,(c_1,c_2)$ with $s\in[0,3/2]$, then there exists an eigenfunction  $\phi_j(j\in\{1,2,\cdots,N\})$ being not the polynomial on any open set $G\subset \Omega$.
	\end{theorem}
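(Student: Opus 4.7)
The plan is to argue by contradiction. Suppose every $\phi_n$ agrees, on some open subset $G_n \subset \Omega$, with a non-zero polynomial $p_n$; the goal is to derive a contradiction, which would supply the required $j$.

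First, I would produce a common open set $\widetilde G \subset \Omega$ on which all of $\phi_1,\ldots,\phi_N$ (and therefore $\rho = \sum_n |\phi_n|^2$) are polynomials. This is the main obstacle, since the $G_n$ may a priori be disjoint. I expect to overcome it by exploiting the coupling in \eqref{Orig-Eq}: dividing the $n$th equation by $\phi_n$ on $G_n$ (where $\phi_n\neq 0$) yields
\begin{equation*}
\mathcal{N}(\rho) = \lambda_n + \kappa\,\frac{\Delta p_n}{p_n} - V \quad \text{on } G_n,
\end{equation*}
which strongly constrains $\rho$ on $G_n$ (the right-hand side is rational with fractional powers, since $V \in \mathcal{P}_{\mathbb{Q}_+}^{\mu}$-type). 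Iterating this over $n$ and using continuity of the $\phi_n$, one should either extract an open $\widetilde G$ on which every $\phi_n$ is a polynomial (so that $\rho$ is a polynomial of degree $d = 2\max_n\deg p_n$), or reduce to the degenerate case in which $\rho$ is constant on $\widetilde G$; in the latter case, $\mathcal{N}(\rho)$ is constant on $\widetilde G$, forcing $V$ to be a rational function determined by $p_n$, and the non-constancy assumption on $V$ together with Proposition \ref{prop:pnonzero]} yields an immediate contradiction.

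With $\rho$ a non-zero polynomial on $\widetilde G$, multiply the eigenvalue equation for $\phi_1$ by $\phi_1\prod_{i} h_i(\rho)\prod_{j} g_j$. All denominators clear, and one obtains a polynomial-with-fractional-power-coefficients identity
\begin{equation*}
\sum_{i=1}^K \widetilde P_i\,\ln q_i(\rho) = \widetilde R \quad \text{on } \widetilde G,
\end{equation*}
where $\widetilde P_i, \widetilde R \in \mathcal{P}_{\mathbb{Q}_+}^{1/k}(\mathbb{R}^3)$ for a suitable positive integer $k$, and $q_i(\rho)\in\mathcal{P}_{\mathbb{Q}_+}(\mathbb{R}^3)$. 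A routine degree count, using $d = 2\max_n\deg p_n$ (this is the origin of the factor $1/2$ on $\max_j(\deg f_j - \deg g_j)$ in \eqref{eq:degmax}) together with \eqref{eq:degmax}, gives $\deg \widetilde P_1 > \deg \widetilde P_i$ for every $i \geq 2$ and $\deg \widetilde P_1 > \deg \widetilde R$.

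The closing step is an inhomogeneous extension of Lemma \ref{lem:pisumlnsum}. Rewriting the identity as $\sum_i \widetilde P_i \ln q_i(\rho) - \widetilde R = 0$ on $\widetilde G$, I would select by Proposition \ref{prop:pnonzero]} a point $x_0 \in \widetilde G$ at which $\widetilde P_1(x_0) \neq 0$ and $q_1(\rho(x_0)) \neq 1$, and then run the same growth/scaling comparison that underlies Lemma \ref{lem:pisumlnsum}: along a ray through $x_0$ contained in $\widetilde G$, the strictly dominant term $\widetilde P_1\ln q_1(\rho)$ grows like $t^{\deg \widetilde P_1}\ln t$, which cannot be matched by the subdominant logarithmic terms (of strictly lower polynomial degree) nor by the polynomial $\widetilde R$ (of strictly lower polynomial degree and no logarithmic factor). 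This yields the required contradiction, proving the first assertion.

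Finally, under the additional assumptions $V \in L^2(\Omega)$ and $\mathcal{N}_1 \in \mathscr{P}(s,(c_1,c_2))$ with $s\in[0,3/2]$, Theorem \ref{UCP-NonLin-Loc-NonLoc} provides the unique continuation property, which rules out any $\phi_j$ vanishing on an open subset of $\Omega$: such vanishing would force $\phi_j \equiv 0$ on $\Omega$, contradicting $\int_\Omega \phi_j^2 = 1$. Combined with the first part, the $\phi_j$ produced there cannot coincide with the zero polynomial on any open set either, so $\phi_j$ is not \emph{any} polynomial on any open subset of $\Omega$, which is the strengthened conclusion.
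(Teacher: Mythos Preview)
Your proposal diverges from the paper chiefly in the first step. The paper does \emph{not} attempt to manufacture a common open set from potentially disjoint $G_n$'s; it simply begins its contradiction argument with ``assume that all eigenfunctions $\phi_1,\ldots,\phi_N$ are polynomials on some open set $G$'' (a single $G$), then lets $\phi_1$ be the one of maximal degree so that $\deg\rho=2\deg\phi_1$. From there it splits into two cases: if $\deg\phi_1>0$, the degree hypothesis \eqref{eq:degmax} shows that the term $\frac{p_1(\rho)}{h_1(\rho)}\phi_1\ln q_1(\rho)$ strictly dominates every other term of the $\phi_1$-equation, and Lemma~\ref{lem:pisumlnsum} (applied after absorbing the non-logarithmic terms as a single $\tilde p\cdot\ln(\text{const})$) yields a point where the equation fails; if $\deg\phi_1=0$, then every $\phi_j$ is a constant $c_j$ on $G$, and either some $c_j=0$ (done) or the equation collapses to $V=\text{const}$ on $G$, contradicting the hypothesis that $V$ is non-constant. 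The final sentence (UCP via Theorem~\ref{UCP-NonLin-Loc-NonLoc}) is exactly as you describe.

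The step you single out as ``the main obstacle'' therefore is not one the paper addresses, and your proposed mechanism for it does not work as written. Knowing that $\mathcal{N}_1(\rho)=\lambda_n+\kappa\,\Delta p_n/p_n-V$ on $G_n$ constrains $\rho$ on $G_n$, but it gives no information about any $\phi_m$ with $m\neq n$ on $G_n$, and nothing forces the $G_n$'s to intersect; ``iterating over $n$ and using continuity'' does not bridge disjoint sets. Your treatment of the degenerate case is also incomplete: $\rho$ being constant on $\widetilde G$ does not by itself make each $\phi_n$ constant there, so you cannot immediately conclude $V=\text{const}$; the paper gets this for free because, on a common $G$, the $\phi_j$ of maximal degree having degree zero forces \emph{all} $\phi_j$ to be constants. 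Once a common $G$ is granted, the remainder of your argument (clear denominators, compare degrees, derive a contradiction) matches the paper's; your ``growth along a ray'' heuristic is a reasonable substitute for Lemma~\ref{lem:pisumlnsum}, though it needs care since $\widetilde G$ is bounded and contains no ray---one would have to pass through the algebraic identity first.
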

	\begin{proof}
		Assume that all eigenfunctions $\{\phi_1,\phi_2,\ldots,\phi_N\}$ are polynomials on some open set $G$: $\phi_j\in \mathcal{P}_\ell(G)(j=1,2,\cdots,N)$ for some positive integer $\ell$. Without loss of generality, let $\deg\phi_1\ge \displaystyle\max_{2\le j\le N}\deg\phi_j$. We have $\deg\rho = 2\deg\phi_1$ and see from \eqref{Orig-Eq} that
		\begin{equation}\label{eq:alpha0sum0}
		-\kappa\Delta \phi_1-\sum_{j=1}^M\frac{f_j}{g_j}\phi_1+\sum_{i=1}^K\frac{p_i(\rho)}{h_i(\rho)}\phi_1\ln q_i(\rho)=\lambda_1 \phi_1,~\text{in }G.
		\end{equation}
		If $\deg\phi_1>0$, then we see from \eqref{eq:degmax} that
		\begin{align*}
			(\deg p_1-\deg h_1)\deg\rho+\deg\phi_1 &>\deg \Delta\phi_1,\\
			(\deg p_1-\deg h_1)\deg\rho+\deg\phi_1 &>\deg\left(\frac{f_j}{g_j}\phi_1\right),~ j=1,2,\ldots,N,\\
			(\deg p_1-\deg h_1)\deg\rho+\deg\phi_1 &>(\deg p_i-\deg h_i)\deg\rho+\deg\phi_1,~ i=2,3,\ldots,K,\\
			(\deg p_1-\deg h_1)\deg\rho+\deg\phi_1 &>\deg \phi_1.
		\end{align*}
		Since $q_i$ are polynomials implying $q_i(\rho)\in\mathcal{P}_{\mathbb{Q}_+}(\Omega)(i=1,2,\cdots,K)$, we obtain from Lemma \ref{lem:pisumlnsum} that
		\[
		-\kappa\Delta \phi_1(x_0)-\sum_{j=1}^M\frac{f_j(x_0)}{g_j(x_0)}\phi_1(x_0)+\sum_{i=1}^K\frac{p_i(\rho)(x_0)}{h_i(\rho)(x_0)}\phi_1(x_0)\ln q_i(\rho)(x_0)\ne\lambda_1 \phi_1(x_0)
		\]
		for some $x_0\in G$, which is a contradiction to \eqref{eq:alpha0sum0}. Thus
		we arrive at that $\deg \phi_1=0$ on $G$. Since $\deg \phi_1 \ge \displaystyle\max_{2\le j\le N}\deg\phi_j$, we have that $\phi_j=c_j(j=1,2,\cdots,N)$  are constants on $G$. If $c_j\neq 0$ for all $j\in\{1,2,\ldots,N\}$, then
		\[
		\sum_{j=1}^M\frac{f_j}{g_j}=\sum_{i=1}^K\frac{p_i(\rho)}{h_i(\rho)}\ln q_i(\rho)-\lambda_1,\quad\text{in } G,
		\]
		with constant $\rho=\displaystyle\sum_{j=1}^N c_j^2$, which is impossible. Hence $c_j=0$ for some $j\in\{1,2,\ldots,N\}$.
		
		If in addition, $V\in L^2(\Omega)$ and $\mathcal{N}_1(t)\in\mathscr{P}(s,(c_1,c_2)$ with $s\in[0,3/2]$, then Theorem \ref{UCP-NonLin-Loc-NonLoc} implies that $\phi_j=0$ in $\Omega$ for some $j\in \{1,2,\ldots,N\}$, which is a contradiction to $\displaystyle\int_\Omega \phi_j^2=1$. This completes the proof.
	\end{proof}
	
	\begin{remark}\label{rmk:GPE}
		Note that Theorem \ref{thm:nonpoly-K=0} may be also true even if
		\[
		\deg p_1-\deg h_1=\max\limits_{1\le j\le M}(\deg f_j-\deg g_j)/2.
		\]
		For instance, no eigenfunction $\phi\in H^2(\Omega)$ of GPE \cite{bao13,zhou04}
		\[
		\left(-\frac12\Delta+V+\beta|\phi|^2\right)\phi=\lambda \phi
		\]
		with a harmonic trap potential
		\[
		V(x)=\gamma_1 \xi_1^2+\gamma _2 \xi_2^2+\gamma_3 \xi_3^2,~\gamma_1,\gamma_2,\gamma_3>0
		\]
		can be a polynomial on any open set $G\subset\Omega$, where $x=(\xi_1,\xi_2,\xi_3)\in \mathbb{R}^3$.
	\end{remark}
	
	
	\begin{theorem}\label{Thm-tfw}
		Let $V$ and $\mathcal{N}$ be defined by \eqref{V-nonlin-nonloc} and \eqref{N-nonlin-nonloc}-\eqref{N2} with $\alpha\ne 0$, respectively. Suppose $\alpha\Delta V$ is not a positive constant function. If  either
		\[
		\displaystyle\max_{1\le i\le K}(\deg p_i-\deg h_i)\le 1 ~\mbox{and}~\max\limits_{1\le j\le M}(\deg f_j-\deg g_j)<4,
		\]
		or $\deg q_1=0$ and
		{\small
		\begin{equation}\label{neq:maxdeg}
		\deg p_1 - \deg h_1>\displaystyle\max\left\{2,~\max_{1\le j\le M}(\deg f_j-\deg g_j)/2,~\max_{2\le i\le K}(\deg p_i-\deg h_i)\right\},
		\end{equation}
		}then for any solution $\Phi=(\phi_1,\phi_2,\ldots,\phi_N)$ of \eqref{Orig-Eq}, there exists an eigenfunction $\phi_j(j\in \{1,2,\cdots,N\})$ being not the non-zero polynomial on any open set $G\subset \Omega$.
		If in addition, $V\in L^2(\Omega)$ and $\mathcal{N}_1(t)\in\mathscr{P}(s,(c_1,c_2)$ with $s\in[0,3/2]$, then there exists an eigenfunction $\phi_j(j\in \{1,2,\cdots,N\})$ being not the polynomial on any open set $G\subset \Omega$.
	\end{theorem}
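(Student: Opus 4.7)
The plan is to exploit the three-dimensional Newton-potential identity $\Delta\mathcal{N}_2(\rho)=-4\pi\alpha\rho$ on $\Omega$, which converts the nonlocal Hartree term into a local polynomial expression after one Laplacian, reducing the analysis to a rational/logarithmic identity on $G$ in the spirit of the proof of Theorem \ref{thm:nonpoly-K=0}. Assuming for contradiction that every $\phi_i$ is a non-zero polynomial on some open $G\subset\Omega$, order them so that $\phi_1$ has maximal degree $\ell$ and, after shrinking $G$, $\phi_1\ne 0$ throughout $G$. Dividing the eigenvalue equation for $\phi_1$ by $\alpha\phi_1$ and applying $\Delta$ yields the key local identity
\begin{equation}\label{planeq}
\Delta V=4\pi\alpha^2\rho+\kappa\,\Delta\!\left(\frac{\Delta\phi_1}{\phi_1}\right)-\Delta\mathcal{N}_1(\rho)\quad\text{on }G.
\end{equation}

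The next step is to show that \eqref{planeq} forces $\ell=0$ by a degree comparison. Note $\deg\rho=2\ell$ and $\deg\Delta(\Delta\phi_1/\phi_1)=-4$. In Case A, the bounds $\max_i(\deg p_i-\deg h_i)\le 1$ and $\max_j(\deg f_j-\deg g_j)<4$ give $\deg\Delta V<2$ while every rational and log-coefficient piece of $\Delta\mathcal{N}_1(\rho)$ has degree at most $2\ell-2$, so $4\pi\alpha^2\rho$ (a non-zero polynomial of exact degree $2\ell$) is the unique top-degree contribution on the right and its top-degree part cannot be matched on the left when $\ell\ge 1$. In Case B, $\deg q_1=0$ makes $\ln q_1$ a non-zero constant, so $\Delta[(p_1(\rho)/h_1(\rho))\ln q_1]$ is a purely rational term of exact degree $2\ell(\deg p_1-\deg h_1)-2$; by \eqref{neq:maxdeg} this strictly dominates $\Delta V$, $4\pi\alpha^2\rho$, $\Delta(\Delta\phi_1/\phi_1)$, and each $\Delta[(p_i(\rho)/h_i(\rho))\ln q_i(\rho)]$ with $i\ge 2$, again yielding a contradiction for $\ell\ge 1$. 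The comparison is made rigorous by clearing denominators via the monic polynomial of Proposition \ref{k-th}, invoking Proposition \ref{prop:pnonzero]} to preclude the surviving leading polynomial from vanishing, and using Lemma \ref{lem:pisumlnsum} to separate transcendental $\ln q_i(\rho)$ contributions from rational ones.

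Having concluded $\ell=0$, every $\phi_i\equiv c_i$ is constant on $G$ and $\rho\equiv\rho_0=\sum c_i^2$. For any index $i$ with $c_i\ne 0$, the eigenvalue equation reduces to $V+\mathcal{N}_1(\rho_0)+\mathcal{N}_2(\rho_0)=\lambda_i$ on $G$; applying $\Delta$ and using the Newton-potential identity once more yields $\alpha\,\Delta V=4\pi\alpha^2\rho_0$ on $G$, a non-negative constant. By real-analyticity of $V$ away from its singularities this identity propagates beyond $G$, so the hypothesis that $\alpha\Delta V$ is not a positive constant function forces $\rho_0=0$, hence every $c_i=0$, contradicting the assumed non-vanishing. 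This gives the first conclusion; under the additional hypotheses $V\in L^2(\Omega)$ and $\mathcal{N}_1(t)\in\mathscr{P}(s,(c_1,c_2))$, Theorem \ref{UCP-NonLin-Loc-NonLoc} then propagates any $\phi_j\equiv 0$ on $G$ to $\phi_j\equiv 0$ on $\Omega$, contradicting $\int_\Omega\phi_j^2=1$ and giving the second conclusion.

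The main obstacle is executing the degree comparison in \eqref{planeq} rigorously when the terms are rational functions built from $\mathcal{P}_{\mathbb{Q}_+}^\mu$ elements with fractional exponents and when the transcendental factors $\ln q_i(\rho)$ with $\deg q_i>0$ are present; the polynomial-theoretic framework of Section 2.3, together with the real-analyticity needed to propagate the constancy of $\alpha\Delta V$ beyond $G$, is exactly what makes each step go through.
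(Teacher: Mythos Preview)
Your approach is essentially the paper's: divide the $\phi_1$-equation by $\phi_1$, apply $\Delta$ so that the Hartree term becomes the local polynomial $-4\pi\alpha\rho$, and then run a degree comparison through Lemma~\ref{lem:pisumlnsum} to force $\ell=0$, after which the constant case yields $\alpha\Delta V$ equal to a positive constant on $G$. Two small corrections: the coefficient in your displayed identity should be $4\pi\alpha\rho$ (not $4\pi\alpha^2\rho$), and the paper does not propagate the constancy of $\alpha\Delta V$ beyond $G$ via real-analyticity---that claim is not justified for general $\mathcal{P}_{\mathbb{Q}_+}^\mu$ ratios, and it is unnecessary because the hypothesis is used exactly as ``$\alpha\Delta V$ is not a positive constant on any open set,'' which is already contradicted on $G$.
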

	
	\begin{proof}
		Assume that all eigenfunctions $\{\phi_1,\phi_2,\ldots,\phi_N\}$ are  polynomials on some open set $G$: $\phi_j\in \mathcal{P}_\ell(G)~(j=1,2,\cdots,N)$ for some positive integer $\ell$. Without loss of generality, let $\deg\phi_1\ge \displaystyle\max_{2\le j\le N}\deg\phi_j$. Obviously, $\deg\rho = 2\deg\phi_1$.

		If $\phi_1(x)\ne 0$ for any $x\in G$ and $\deg\phi_1>0$, then we obtain from \eqref{Orig-Eq} that
		\[
		-\kappa\frac{\Delta \phi_1}{\phi_1}-\sum_{j=1}^M\frac{f_j}{g_j}+\alpha\int_{\Omega}\frac{\rho(y)}{|\cdot-y|}\textup{d}y+\sum_{i=1}^K\frac{p_i(\rho)}{h_i(\rho)}\ln q_i(\rho)=\lambda_1,\quad\text{in }G.
		\]
		Applying Laplace operator to both sides yields
		\begin{equation}\label{LaplaceAgain}
			-\kappa p_{\phi_1}-\sum_{j=1}^M f_{j,g}-4\alpha\pi\rho+\sum_{i=1}^K p_{i,h,\rho}\ln q_i(\rho)+\sum_{i=2}^K p_{i,h,q,\rho}=0,\quad\text{in }G,
		\end{equation}
		where
		\begin{align*}
		p_{\phi_1}&=\Delta\left(\frac{\Delta \phi_1}{\phi_1}\right),\\
		f_{j,g}&=\Delta\left(\frac{f_j}{g_j}\right),~j=1,\ldots,M,\\
		p_{i,h,\rho} &= \Delta\left(\frac{p_i(\rho)}{h_i(\rho)}\right),~i=1,\ldots,K,\\
		p_{i,h,q,\rho} &= 2\frac{\nabla(p_i(\rho)/h_i(\rho))\cdot\nabla q_i(\rho)}{q_i(\rho)}+\frac{p_i(q_i(\rho)\Delta q_i(\rho)-|\nabla q(\rho)|^2)}{h_i(\rho)q_i^2(\rho)},~i=1,\ldots,K.
		\end{align*}
		
		If $\displaystyle\max_{1\le i\le K}(\deg p_i-\deg h_i)\le 1$  and $\max\limits_{1\le j\le M}(\deg f_j-\deg g_j)<4$, then only $4\alpha\pi\rho$ has the max degree.
		
		If $\deg q_1=0$ and \eqref{neq:maxdeg} holds, then $p_{1,h,q,\rho}=0$. Thus only one term, which is one term of $p_{1,h,\rho}$, has the max degree.
		
		Therefore, we get a contradiction to \eqref{LaplaceAgain} from Lemma \ref{lem:pisumlnsum}. Consequently, $\deg \phi_1\ge \displaystyle\max_{2\le j\le N}\deg \phi_j$ leads to that $\phi_j=c_j(j=1,2,\cdots,N)$ are constants in $G$.
		
		If $c_j\not=0$ for all $j\in\{1,2,\cdots,N\}$, we then derive from \eqref{LaplaceAgain} that
		\[
		\Delta V=4\alpha\pi \sum_{i=1}^N c_i^2 \quad \forall x\in G,
		\]
		which is impossible. Hence  $c_j=0$ for some $j\in\{1,2,\cdots,N\}$.
		
		If in addition, $V\in L^2(\Omega)$ and $\mathcal{N}_1(t)\in\mathscr{P}(s,(c_1,c_2)$ with $s\in[0,3/2]$, then we complete the proof by using Theorem \ref{UCP-NonLin-Loc-NonLoc}.
	\end{proof}

	We may apply Theorem \ref{thm:nonpoly-K=0} or Theorem \ref{Thm-tfw} to typical mathematical models in quantum physics to see the eigenfunction behavior.
	\begin{example}\label{eg:SN}
		No eigenfunction of  Schr\"odinger-Newton equation \cite{harrison03}
		\begin{align*}
			\left(-\Delta -\int_{\Omega}\frac{|u(y)|^2}{|\cdot-y|}\textup{d}y\right) u=\lambda u,~\text{in }\mathbb{R}^3
		\end{align*}
 		can be a polynomial on any open set of $\mathbb{R}^3$.
	\end{example}
	
	\begin{example}\label{eg:TFW}
		No eigenfunction of
		Thomas-Fermi-Dirac-von Weiz\"{s}acker equation \cite{chen11-afem,lieb81}
		\begin{equation*}\label{eq:TFDW}
		\left(-\kappa\Delta-\sum_{j=1}^M\frac{Z_j}{|\cdot-r_j|}+\int_{\Omega}\frac{\rho(y)}{|\cdot-y|}\textup{d}y+\beta_1 u^{2\nu-2}-\beta_2 u^{2/3}\right)u=\lambda u
		\end{equation*}
		can be a polynomial locally for an rational number $\nu$ in $[1,2]$, where $\beta_1$ and $\beta_2$ are constants.
	\end{example}
	
	\begin{example}\label{eg:KS}
		Kohn-Sham equation of a system consisting of $M$ nuclei of charges $\{Z_1,Z_2,\ldots,Z_M\}$ located at the positions $\{r_1,r_2,\ldots,r_M\}$ and $N$ electrons is as follows:
		{\small \begin{equation}\label{KS}
			\left\{\begin{aligned}
			\left(-\frac12\Delta+V_\text{\rm ext}+\int_{\Omega}\frac{\rho(y)}{|\cdot-y|}\textup{d}y+V_\text{\rm xc}(\rho)\right)\phi_i&=\lambda_i \phi_i,~ \text{in~} \Omega,~ i = 1,2,\ldots,N,\\
			\int_{\Omega}\phi_i \phi_j&=\delta_{ij},
			\end{aligned}
			\right.
		\end{equation}
		}where $V_\text{\rm ext}=-\sum\limits_{k=1}^M\frac{Z_k}{|\cdot-r_k|}$ is the associated external potential, $\rho=\sum\limits_{i=1}^N|\phi_i|^2$ is the electronic density, and $V_{\text{\rm xc}}(\rho)$ is the exchange-correlation potential such as $X_\alpha$ exchange-correction potential \cite{slater51}
		\begin{equation}\label{eq:xcXalpha}
			V_{\textup{xc}}(\rho)=\frac{3}{2}\alpha\left(\frac{3}{\pi}\rho\right)^{1/3}
		\end{equation}
		with $\alpha\in [2/3,1]$ and Perdew-Zunger type local-density approximations (LDA) potential \cite{perdew-zunger81}:
		{\small
		\begin{equation}\label{eq:xcLDA}
			V_{\text{xc}}^{\text{LDA}}(\rho)=\left\{
			\begin{aligned}
			&-\frac{0.1423+0.0633 r_{s}+0.1748 \sqrt{r_{s}}}{(1+1.0529 \sqrt{r_{s}}+0.3334 r_{s})^{2}}-\left(\frac{9}{4\pi^2}\right)^{\frac13}\frac{1}{r_s},&&\text{if~}r_s\ge 1, \\
			&0.0311 \ln r_{s}-0.0584+0.0013 r_{s} \ln r_{s}-0.0084 r_{s}-\left(\frac{9}{4\pi^2}\right)^{\frac13}\frac{1}{r_s},&&\text{if~}r_s<1
			\end{aligned}
			\right.
		\end{equation}
		}with $r_s=(\frac{3}{4\pi\rho})^{1/3}$.
		
		We see that if the exchange-correction potential is chosen as either \eqref{eq:xcXalpha} or \eqref{eq:xcLDA}, then for any solution  $\Phi=(\phi_1,\phi_2,\ldots,\phi_N)$ of \eqref{KS}, there exists an eigenfunction $\phi_j~(j\in \{1,2,\cdots,N\})$ being not the polynomial on any open set $G\subset \Omega$.
%
		In fact, the same conclusion is true for Vosko-Wilk-Nusair type LDA \cite{vwm80}
		\begin{equation*}
			\begin{aligned}
			V_{\textup{xc}}(\rho)=\frac{A}{2}\{\ln \frac{t^{2}}{X(t)}+\frac{2 b}{Q} \tan ^{-1} \frac{Q}{2 t+b}&-\frac{b t_{0}}{X(t_{0})}(\ln \frac{\left(t-t_{0}\right)^{2}}{X(t)}\\+& \frac{2\left(b+2 t_{0}\right)}{Q} \tan ^{-1} \frac{Q}{2 t+b} ) \},
			\end{aligned}
		\end{equation*}
		where $r_s=(\frac{3}{4\pi\rho})^{1/3}$, $t=r_s^{1/2}$, $X(t)=t^{2}+b t+c$, $Q=\left(4 c-b^{2}\right)^{1 / 2}$, $A=0.0621814$, $t_0 =-0.409286$, $b=13.0720$, and $c=42.7198$.
	\end{example}

	Indeed, we conjecture that no eigenfunction of \eqref{Orig-Eq} can be a polynomial on any open set in $\mathbb{R}^3$ when $N>1$. Unfortunately, it is still open whether it is true or not.
	
	\section{Adaptive approximations}\label{sec:afem}
	In this section, we apply the behavior of the eigenfunctions to investigate the convergence of adaptive finite element approximations of \eqref{Orig-Eq}. We assume that
	\begin{enumerate}[label=(\roman*)]
		\item $V\in L^2(\Omega)$;
		\item $\mathcal{E}\in\mathscr{P}(3,(c_1,c_2))$ with $c_1\ge 0$ or $\mathscr{P}(4/3,(c_1,c_2))$;
		\item $\mathcal{N}_1\in\mathscr{P}(s_1,(c_1,c_2))$ for some $s_1\in [0,2)$ and $t\mathcal{N}'(t)\in\mathscr{P}(s_2,(\tilde{c}_1,\tilde{c}_2))$ for some $s_2\in [0,2)$.
	\end{enumerate}
	
	Let
	\[
	\Omega^+=\bigcup_{T\in\mathcal{T}^+}T,
	\]
	where \begin{equation*}
	\mathcal{T}^+=\bigcup_{k\ge 0}\bigcap_{m\ge k}\mathcal{T}_m.
	\end{equation*}
	
	In our analysis, we need Lemma 4.3 in \cite{morin09}, which is stated as follows:
	\begin{lemma}\label{lem:equi-hk-0}
		The set $\Omega^+$ is empty if and only if $\displaystyle\lim_{k\to\infty}\|h_k\|_{0,\infty,\Omega}=0$.
	\end{lemma}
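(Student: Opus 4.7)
\textbf{Proof plan for Lemma \ref{lem:equi-hk-0}.} I would split the equivalence into its two implications and handle them separately. The ``if'' direction is essentially definitional: were $\Omega^+$ nonempty, some element $T$ would lie in $\mathcal{T}_m$ for all $m\ge k_0$, so $\|h_m\|_{0,\infty,\Omega}\ge h_T>0$ for every such $m$, contradicting $\|h_k\|_{0,\infty,\Omega}\to 0$. This disposes of one direction in a single line.

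The substantial direction is $\Omega^+=\emptyset\Longrightarrow\lim_{k\to\infty}\|h_k\|_{0,\infty,\Omega}=0$, which I would prove by contraposition. Suppose $\|h_k\|_{0,\infty,\Omega}\not\to 0$: then for some $\varepsilon>0$ and some subsequence $k_j\uparrow\infty$ there exist $T_j\in\mathcal{T}_{k_j}$ with $h_{T_j}\ge\varepsilon$. The pivotal observation to establish is that the collection of \emph{all} elements that ever appear in any $\mathcal{T}_k$ with diameter $\ge\varepsilon$ is finite. Granting that, the pigeonhole principle produces a single element $T^\star$ belonging to $\mathcal{T}_{k_j}$ for infinitely many $j$. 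Because the meshes are nested conforming refinements, once $T^\star$ has been subdivided it cannot reappear in a later mesh; so $T^\star\in\mathcal{T}_{k_j}$ for infinitely many $j$ forces $T^\star\in\mathcal{T}_k$ for every $k\ge a(T^\star)$, where $a(T^\star)$ is the first mesh in which $T^\star$ appears. Hence $T^\star\subset\Omega^+$, contradicting $\Omega^+=\emptyset$.

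The one genuine step, and the main obstacle, is the finiteness claim, so this is where I would spend most of the care. All elements ever occurring form a forest rooted at $\mathcal{T}_0$ (which is finite), whose parent--child links are given by the local refinement rule. Each refinement splits a parent into a bounded number of children whose diameters are shrunk by at least a fixed factor $\theta<1$ depending only on the shape-regularity constant $\gamma^\star$ and the refinement algorithm (e.g.\ newest-vertex bisection). Consequently any descent chain of elements whose diameter remains $\ge\varepsilon$ has length at most $\log(d_\Omega/\varepsilon)/\log(1/\theta)$. Finite roots, bounded branching, and bounded depth together force the sub-forest of elements of diameter $\ge\varepsilon$ to be finite, which is precisely what the pigeonhole step requires. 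Everything else reduces to the definition of $\Omega^+$ and the nestedness of $\{\mathcal{T}_k\}$.
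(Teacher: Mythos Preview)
The paper does not supply a proof of this lemma; it merely quotes it as Lemma~4.3 of \cite{morin09}. Your argument is essentially the standard one and is correct in substance. One minor inaccuracy is worth flagging: a single bisection need not shrink the diameter by a fixed factor $\theta<1$---in newest-vertex bisection, for instance, a child can inherit the longest edge of its parent unchanged. The clean way to obtain bounded depth is to argue with volume instead: each bisection halves the volume, so a generation-$g$ descendant $T$ of a root $T_0\in\mathcal{T}_0$ satisfies $|T|=2^{-g}|T_0|$, while shape regularity gives $|T|\ge c\,(h_T/\gamma^{*})^3\ge c\,(\varepsilon/\gamma^{*})^3$ whenever $h_T\ge\varepsilon$. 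These two inequalities combine to bound $g$ by a constant depending only on $\varepsilon$, $\gamma^{*}$, and $\mathcal{T}_0$, and the rest of your argument then goes through unchanged.
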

	
	We observe from Theorem 4.2 in \cite{chen11-afem} and  Theorem 3.5 in \cite{chen14} that approximations $\Theta_k$ produced by Algorithm \ref{algo:AFEM-KS} may converge to a solution of \eqref{Orig-Eq} for any initial mesh and the solution becomes a ground state if the initial mesh size is sufficiently small so that $\Theta_0$ is sufficiently near to $\Theta$. Indeed, based on the eigenfunction behavior, we are able to prove that $\Theta_k$ produced by Algorithm \ref{algo:AFEM-KS} may converge to a ground state of \eqref{Orig-Eq} starting from any initial mesh.
	
	Using the similar argument to the proof of Lemma 6.2 in \cite{morin09}, we have
	\begin{lemma}\label{lem:meshsize-0-NL}
		Let $\{ h_k \}_{k\in\mathbb{N}}$ and $\{ \Theta_k \}_{k\in\mathbb{N}}=\{ (\Lambda_k,\Phi_k) \}_{k\in\mathbb{N}}$ be produced by Algorithm \ref{algo:AFEM-KS}. If there exists an eigenfunction of \eqref{Orig-Eq} being not a polynomial on any open set $G\subset\Omega$, then $\|h_k\|_{0,\infty,\Omega}\to 0$ as $k\to \infty$.
	\end{lemma}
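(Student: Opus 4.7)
The plan is to argue by contradiction. Suppose $\|h_k\|_{0,\infty,\Omega}\not\to 0$; Lemma \ref{lem:equi-hk-0} then yields $\Omega^+\ne\emptyset$, so I can fix an element $T\in\mathcal{T}^+$ together with an index $k_0$ such that $T\in\mathcal{T}_k$ for every $k\ge k_0$. The strategy is to extract a limit $\Phi_\ast$ of the discrete solutions $\Phi_k$, identify $\Phi_\ast$ as a genuine solution of \eqref{Orig-Eq}, and then observe that its restriction to $T$ must be a polynomial of degree at most $n$ — in direct conflict with the non-polynomial hypothesis.

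First I would extract a convergent subsequence. Using the growth conditions (i)--(iii) together with the orthonormality $\int_\Omega\phi_{i,k}\phi_{j,k}=\delta_{ij}$, the sequence $\{\Phi_k\}$ is uniformly bounded in $\mathcal{H}$, so reflexivity of $H_0^1(\Omega)$ and the Rellich--Kondrachov theorem produce a subsequence with $\Phi_k\rightharpoonup\Phi_\ast$ weakly in $\mathcal{H}$ and strongly in $L^p(\Omega)^N$ for every $p<6$. The $H^1$-bound also gives boundedness of the Lagrange multipliers $\Lambda_k=\bigl(\int_\Omega\phi_{i,k}H_{\Phi_k}\phi_{j,k}\bigr)$, so a further extraction delivers $\Lambda_k\to\Lambda_\ast$. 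Passing to the limit in \eqref{eq:d-weakKS} requires handling the nonlinear terms $V\phi_{i,k}$, $\mathcal{N}_1(\rho_k)\phi_{i,k}$, and $\mathcal{N}_2(\rho_k)\phi_{i,k}$; the continuity of $\mathcal{N}_1$ under (iii), the Hardy--Littlewood--Sobolev smoothing effect of $\mathcal{N}_2$, and the strong $L^p$ compactness of $\rho_k$ together show that $(\Lambda_\ast,\Phi_\ast)$ solves \eqref{eq:weakKS}, hence is a solution of \eqref{Orig-Eq}. This step recapitulates Theorem 4.2 of \cite{chen11-afem} and Theorem 3.5 of \cite{chen14}, which the introduction of this section already invokes in order to guarantee convergence of Algorithm \ref{algo:AFEM-KS} on any initial mesh.

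The decisive new observation is local. Since $T\in\mathcal{T}_k$ for every $k\ge k_0$ and each $\phi_{i,k}\in S_0^{h_k}(\Omega)$ is a continuous piecewise polynomial of degree at most $n$ over $\mathcal{T}_k$, its restriction satisfies $\phi_{i,k}|_T\in\mathcal{P}_n(T)$. The weak $H^1$-convergence $\phi_{i,k}\rightharpoonup\phi_{i,\ast}$ restricts to weak convergence in $H^1(\mathrm{int}\,T)$, and because $\mathcal{P}_n(T)$ is a finite-dimensional — hence closed — subspace of $H^1(\mathrm{int}\,T)$, the limit $\phi_{i,\ast}|_T$ again lies in $\mathcal{P}_n(T)$ for every $i=1,\ldots,N$. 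Thus every component of the eigenfunction $\Phi_\ast$ is a polynomial on the open interior of $T$. Applying the hypothesis of the lemma to $\Phi_\ast$ (as supplied by Theorems \ref{thm:nonpoly-K=0} and \ref{Thm-tfw}) furnishes some $j$ for which $\phi_{j,\ast}$ is not a polynomial on any open subset of $\Omega$, which directly contradicts the previous sentence.

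The genuine difficulty is concentrated in the first step: passing to the limit in the nonlinear Kohn--Sham-type terms under only the growth bounds (ii)--(iii) and verifying that $\Phi_\ast$ is indeed a solution of \eqref{Orig-Eq} (not merely a weak $H^1$-limit of Galerkin approximants). Because this is exactly the machinery developed in \cite{chen14,chen11-afem}, the nonlinear analysis can be imported verbatim, and what is distinctive about the present lemma is the elementary polynomial-closedness argument on $T\in\mathcal{T}^+$, which is precisely where the non-polynomial eigenfunction theorems of Section \ref{sec:behaviour-eigenf} are brought to bear.
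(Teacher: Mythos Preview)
Your argument is correct and follows essentially the same route as the paper's own proof: contradict via Lemma~\ref{lem:equi-hk-0} to obtain a persistent element $T\in\mathcal{T}^+$, invoke the compactness/limit-passage machinery of \cite{chen14,chen11-afem} to produce a solution $\Phi_\ast$ of \eqref{Orig-Eq} as a (sub)limit of the discrete iterates, and then use the finite-dimensionality of $\mathcal{P}_n(T)$ to force $\phi_{j,\ast}|_T\in\mathcal{P}_n(T)$, contradicting the non-polynomial hypothesis. The only cosmetic differences are that the paper quotes strong $\mathcal{H}$-convergence directly from Theorem~3.5 of \cite{chen14} (you argue weak $H^1$ plus $L^p$ compactness, which is equally sufficient since finite-dimensional subspaces are weakly closed), and the paper singles out the non-polynomial component first rather than showing all components are polynomial on $T$.
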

	\begin{proof}
		We obtain from the proof of Theorem 3.5 in \cite{chen14} that there exists a subsequence $\Phi_{k_m}$ and some solution $\Phi$ of \eqref{Orig-Eq} such that $\Phi_{k_m}\to\Phi$ in $\mathcal{H}$. Without loss of generality, we assume that $\phi_1$ cannot be a polynomial on any open set $G\subset \Omega$. As a result, $\phi_{1,k_m}\to\phi_1$ in $H^1_0(\Omega)$ as $m\to\infty$.
		
		If $\|h_k\|_{0,\infty,\Omega}$ does not tend to zero, then we derive from  Lemma \ref{lem:equi-hk-0} that $\Omega^+$ is not empty. Thus there exist $T\in\mathcal{T}^+$ and $k_0\in\mathbb{N}$ such that $T\in\mathcal{T}_k$ for all $k\ge k_0$. Since $\displaystyle\lim_{m\to\infty}\|\phi_{1,k_m}-\phi_1\|_{0,T}=0$ and $\phi_{1,k}|_T\in\mathcal{P}_n(T)$ for some integer $n$, we obtain from that $\mathcal{P}_n(T)$ is a finite dimensional space that $\phi_1\in\mathcal{P}_n(T)$, which contradicts to that $\phi_1$ cannot be a polynomial on any open set $G\subset\Omega$. This completes the proof.
	\end{proof}
	

	Combining Theorem \ref{thm:nonpoly-K=0}, Theorem \ref{Thm-tfw}, and Lemma \ref{lem:meshsize-0-NL}, we obtain that mesh size $h_k$ tends to zero under the assumption in Theorem \ref{thm:nonpoly-K=0} or that in Theorem \ref{Thm-tfw}, which means that the mesh size will be sufficiently small after finite iteration steps. Namely, approximate set $\Theta_k$ is sufficiently close to $\Theta$ provided $k\gg 1$. Let the distance between sets $X,Y\subset\mathbb{R}^{N\times N}\times\mathcal{H}$ be defined by
	\[
	d_\mathcal{H}(X,Y)=\sup_{(\Lambda,\Phi)\in X}\inf_{(\Sigma,\Psi)\in Y} (|\Lambda-\Sigma|+\|\Phi-\Psi\|_{1,\Omega}),
	\]
	where $|\cdot|$ is the  Frobenius norm in $\mathbb{R}^{N\times N}$. Due to existing works \cite{chen14} and \cite{chen11-afem}, we arrive at
	
	\begin{theorem}
		Let $\{\Theta_k\}_{k\in\mathbb{N}}$ be the sequence generated by Algorithm {\rm\ref{algo:AFEM-KS}}. If
		\[
		\min_{\Psi\in\mathcal{Q}} E(\Psi)<\inf_{(M,\Psi)\in\mathcal{W}\setminus\Theta} E(\Psi)
		\]
		and the assumption in Theorem \ref{thm:nonpoly-K=0} or that in Theorem \ref{Thm-tfw} is satisfied, then
		\begin{align*}
		&\lim_{k\to\infty} E_k=\min_{\Psi\in \mathcal{Q}} E(\Psi),\\
		&\lim_{k\to\infty} d_\mathcal{H}(\Theta_k,\Theta)=0,
		\end{align*}
		where $E_k=E(\Phi)~((\Lambda,\Phi)\in\Theta_k)$.
	\end{theorem}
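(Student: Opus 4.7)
The plan is to turn the eigenfunction behavior of Sections 2--3 into a convergence statement by chaining three facts: the non-polynomial behavior (Theorem \ref{thm:nonpoly-K=0} or Theorem \ref{Thm-tfw}) forces mesh refinement to be global in the limit, global refinement places us in the asymptotic regime covered by the analyses of \cite{chen14,chen11-afem}, and the spectral gap hypothesis $\min_{\Psi\in\mathcal{Q}}E(\Psi)<\inf_{(M,\Psi)\in\mathcal{W}\setminus\Theta}E(\Psi)$ then rules out the possibility of converging to a non-ground-state stationary point of $E$. The assumptions (i)--(iii) are what enable the variational/compactness arguments of \cite{chen14,chen11-afem} to be used as a black box.

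The first step is to invoke either Theorem \ref{thm:nonpoly-K=0} or Theorem \ref{Thm-tfw} (as permitted by the hypothesis) to produce an eigenfunction $\phi_j$ of \eqref{Orig-Eq} that is not a polynomial on any open subset of $\Omega$. Lemma \ref{lem:meshsize-0-NL} then yields $\|h_k\|_{0,\infty,\Omega}\to 0$ as $k\to\infty$, or equivalently $\Omega^+=\varnothing$ by Lemma \ref{lem:equi-hk-0}. This is the qualitatively new ingredient: previous proofs had to \emph{assume} mesh smallness, whereas adaptivity plus non-polynomial behavior \emph{delivers} it.

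The second step is to feed this mesh-size decay into the existing analysis. Since $\bigcup_k V_k$ is dense in $\mathcal{H}$ once $\|h_k\|_{0,\infty,\Omega}\to 0$, a minimizing sequence constructed by projection shows $\limsup_k E_k\le\min_{\Psi\in\mathcal{Q}}E(\Psi)$. By the growth assumptions (ii)--(iii), the sequence $\{\Phi_k\}$ is uniformly bounded in $\mathcal{H}$, so up to a subsequence $\Phi_k\rightharpoonup\Phi^*$ weakly in $\mathcal{H}$ and $\rho_{\Phi_k}\to\rho_{\Phi^*}$ strongly in the Lebesgue spaces controlling $\mathcal{E}$ and the Coulomb bilinear form $D$ (this compactness is standard under (i)--(iii) and is exactly the input used in \cite{chen11-afem}). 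Lower semicontinuity of $E$ along this subsequence combined with the $\limsup$ bound gives $E(\Phi^*)=\min_{\Psi\in\mathcal{Q}}E(\Psi)$, while passage to the limit in \eqref{eq:d-weakKS} (using $\|h_k\|_{0,\infty,\Omega}\to 0$ for the test-function density and the a priori $\Lambda_k$ bound of \cite{chen14}) shows $(\Lambda^*,\Phi^*)\in\mathcal{W}$. The strict gap assumption then forces $(\Lambda^*,\Phi^*)\in\Theta$, and a standard subsequence-of-subsequence argument upgrades this to $d_\mathcal{H}(\Theta_k,\Theta)\to 0$ and $E_k\to\min_{\Psi\in\mathcal{Q}}E(\Psi)$.

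The main obstacle is the limit-passage in the nonlinear Euler--Lagrange system \eqref{eq:weakDKS}: one must show that the nonlinear potentials $V+\mathcal{N}_1(\rho_{\Phi_k})+\mathcal{N}_2(\rho_{\Phi_k})$ and the Lagrange multipliers $\Lambda_k$ converge appropriately so that $(\Lambda^*,\Phi^*)$ actually solves \eqref{weakKS}. This is the only place where one relies substantively on assumptions (i)--(iii), and where one would cite the compactness machinery of \cite{chen14,chen11-afem} rather than re-derive it. Everything else---the mesh-size decay, the variational $\limsup$ inequality, and the exclusion of non-ground-state accumulation points via the spectral gap---follows cleanly from the tools assembled in the paper.
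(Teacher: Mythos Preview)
Your proposal is correct and follows essentially the same logic as the paper: the non-polynomial behavior from Theorem~\ref{thm:nonpoly-K=0} or Theorem~\ref{Thm-tfw} feeds into Lemma~\ref{lem:meshsize-0-NL} to force $\|h_k\|_{0,\infty,\Omega}\to 0$, after which the existing analysis of \cite{chen14,chen11-afem} delivers convergence to the ground state. The only difference is cosmetic: the paper phrases the second step as a re-indexing argument (once $h_k$ is below the threshold required by \cite{chen14,chen11-afem}, treat $\mathcal{T}_{k_0}$ as the ``fine initial mesh'' and invoke those results as a black box), whereas you partially unpack that black box via the density/compactness/lower-semicontinuity route---but this is exactly the content of the cited references, so the two arguments coincide.
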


	As a result, we see from \cite{chen14,chen11} that the adaptive finite element method has asymptotic linear convergence rate and asymptotic optimal complexity from any initial mesh. More precisely, the adaptive finite element method has linear convergence rate and optimal complexity after finite iteration steps.
	
	\section{Concluding remarks}
	In this paper, we have investigated a class of nonlinear eigenvalue problems modeling quantum physics.  We have first proved that the eigenfunction cannot be a polynomial on any open set, which may be reviewed as a refinement of the standard unique continuation property. Then applying non-polynomial behavior of the eigenfunctions, we have shown that  adaptive finite element approximations are convergent even if the initial mesh is not fine enough.
	
	We mention that the same conclusion can be expected for any dimensions greater than  $3$. For instance, our arguments can be applied to the following linear eigenvalue problem:
	\begin{equation}\label{eq:potential}
		-\nabla\cdot(\mathcal{A}\nabla u)+\mathcal{V}u=\lambda \mathcal{B}u,\quad\text{in }\Omega.
	\end{equation}
	where $\Omega\subset\mathbb{R}^d$ for positive integer $d\ge 3$ and $\mathcal{A}$ is a symmetric-matrix-valued function and is uniformly positive definite. We see that \eqref{eq:potential} includes   electronic Schr{\" o}dinger equation
	\begin{equation}\label{schrodinger}
		\left(-\sum_{i=1}^N\frac{\hbar^2}{2m_e}\nabla^2_{x_i}-\sum_{i=1}^N\sum_{j=1}^{M}\frac{Z_je^2}{|x_i-r_j|}+\frac{1}{2}\sum_{i,j=1, i\ne j}^N\frac{e^2}{|x_i-x_j|}\right)\phi=E\phi,\quad\text{in }\mathbb{R}^{3N},
	\end{equation}
	where $\hbar$ is Planck's constant divided by $2\pi$, $m_e$ is the mass of the electron, $\{x_i:i=1,\cdots,N\}$ are the variables that 	describe the electron positions, and $e$ is the electronic charge, $\phi$ is the wavefunction, $N$ is the number of electrons, $M$ is the number of atoms, $Z_j$ is the atomic number of the $j$-th atom, and $r_j$ is the position of the $j$-th atom.

	For convenience of discussion, we introduce the following assumptions:
	\begin{enumerate}[label=\Roman*]
		\item Entries of $\mathcal{A}$ are continuous and piecewise functions in $\mathcal{P}_{\mathbb{Q}_+}(\Omega)$.
		\item  $\mathcal{B}$ is a piecewise function in $\mathcal{P}_{\mathbb{Q}_+}(\Omega)$.
		\item $\mathcal{V}=-\displaystyle\sum_{j=1}^M \frac{f_j}{g_j}$, where $f_j,~q_j~(j=1,2,\ldots,M)$ are piecewise functions in $\mathcal{P}_{\mathbb{Q}_+}^{\mu}(\Omega)$ for some $\mu\in\mathbb{Q}_+$.
		\item $\mathcal{V}$ cannot be equal to $\lambda\mathcal{B}$ for any $\lambda\in\mathbb{R}$ in any open subset of $\Omega$.
	\end{enumerate}
 	\vskip 0.2cm

	If Assumptions I-IV hold true and that entries of $\mathcal{A}|_G, \mathcal{B}|_G$ belong to $ \mathcal{P}_{\mathbb{Q}_+}$ and $f_j|_G,g_j|_G\in\mathcal{P}_{\mathbb{Q}_+}^{\mu}$ for all $j\in\{1,2,\ldots,M\}$ imply that only one among $\deg\mathcal{A}|_G-2,$ $\deg\mathcal{B}|_G,\deg f_1|_G-\deg g_1|_G,\ldots,\deg f_M|_G-\deg g_M|_G$ equals to
	\[
	\max\{\deg\mathcal{A}|_G-2,\deg f_1|_G-\deg g_1|_G,\ldots,\deg f_M|_G-\deg g_M|_G,\deg\mathcal{B}|_G\}
	\]
	when $G\subset\Omega$ is an open subset, then  no eigenfunction of \eqref{eq:potential} can be a non-zero polynomial on $G$. If in addition, \eqref{eq:potential} has a unique continuation property (see, e.g., \cite{schechter80,wolff93}), then any $H^2_{\text{loc}}$ eigenfunction of \eqref{eq:potential} cannot be a polynomial on any open subset of $\Omega$. Since \eqref{schrodinger} satisfies  Assumptions I-IV, in particular, we obtain more sophisticate conclusion than that in the existing literature (see, e.g., \cite{reed78}).

Note that the so-called Non-Degeneracy Assumption of a linear case of
	\begin{equation}\label{eq:linearP}
		-\nabla\cdot(\mathcal{A}\nabla u)=\lambda \mathcal{B}u
	\end{equation}
	has been introduced in \cite{morin09}, which is a special case of \eqref{eq:potential} when $\mathcal{V}=0$, entries of $\mathcal{A}$ are continuous and piecewise linear, and $\mathcal{B}$ is piecewise constant,  with which together convergence of an adaptive finite element method from any initial mesh for \eqref{eq:linearP} is then derived.

	\appendix
	\section*{}\label{appx:A}
	In this appendix, we provide a proof of Lemma \ref{delta-sum-t^j}, whose idea is inspired by Appendixes A and B of \cite{pesic04}.
	\begin{proof}
 		We may view $\delta$ as a polynomial  $\delta(t)$ with respect to $t$. Let $z\ne 1$ be a $k$th root of $1$. We have $\displaystyle\sum_{j=0}^{k-1}z^j=0$ and
		\begin{equation*}\label{z^h}
		\left\{z^{m j}: j=1,2,\ldots,k-1\right\}=\left\{z^j: j=1,2,\ldots,k-1\right\}
		\end{equation*}
		for any positive integer $m$ that is not divisible by $k$. Let
		\begin{equation}\label{ori_pty}
		P_t(y)=\prod_{m=0}^{k-1}\left(y-\delta(z^m t)\right),\quad y\in\mathbb{R},
		\end{equation}
		we obtain that $P_t(\delta)=0$. We claim that $P_t(\delta)=0$ yields the conclusion.
		
		Indeed, it follows from \eqref{ori_pty} that $P_t(y)$ can be rewritten as
		\begin{equation}\label{expand_pty}
		P_t(y)=y^k+\sum_{j=1}^k q_j(a_1,a_2,\ldots,a_{k-1},t)  y^{k-j},
		\end{equation}
		where
		\begin{equation}\label{pty_qj}
		q_j(t_1,t_2,\ldots,t_k)=\sum_{\ell=j}^{j(k-1)}f_{j,\ell}(t_1,t_2\ldots,t_{k-1})t_k^\ell
		\end{equation}
 		and  $f_{j,\ell}(\ell=j,j+1,\ldots,j(k-1))$ are homogeneous polynomials of degree $j$. We see from \eqref{ori_pty} that $\overline{P_t(y)}=P_t(y)$ and $q_j~(j=1,2,\ldots,k)$ are real polynomials. We obtain from \eqref{ori_pty}-\eqref{pty_qj} that
		\begin{align*}
		P_t(y)
		&=\frac{1}{k}\sum_{m=0}^{k-1} P_{z^m t}(y)=y^k+\sum_{j=1}^k\left(\frac1k\sum_{m=0}^{k-1} q_j(a_1,a_2,\ldots,a_{k-1},z^m t)\right)y^{k-j}\\
		&=y^k+\sum_{j=1}^k\left(\frac1k\sum_{\ell=j}^{j(k-1)}f_{j,\ell}(a_1,a_2\ldots,a_{k-1})t^\ell\sum_{m=0}^{k-1} z^{\ell m}\right)y^{k-j}\\
		&=y^k+\sum_{j=2}^k\left(\sum_{m=1}^{j-1}f_{j,mk}(a_1,a_2,\ldots,a_{k-1})t^{mk}\right)y^{k-j},
		\end{align*}
		namely,
		\begin{equation}\label{simplify_pty}
		P_t(y)=y^k+\sum_{j=2}^k\left(\sum_{m=1}^{j-1}f_{j,mk}(a_1,a_2,\ldots,a_{k-1})t^{mk}\right)y^{k-j}.
		\end{equation}
		Comparing \eqref{expand_pty} with \eqref{simplify_pty}, we arrive at
		\begin{align}
		q_1(t_1,t_2,\ldots,t_k)&=0,\notag\\
		q_j(t_1,t_2,\ldots,t_k)&=\sum_{m=1}^{j-1}f_{j,mk}(t_1,t_2,\ldots,t_{k-1})t_k^{mk},~j=2,\ldots,k.\label{simplify_qj}
		\end{align}
		Pick up $p_j(t_1,t_2,\ldots,t_{k})$ such that \[
		p_j(t_1,t_2,\ldots,t_{k-1},t_k^k)=q_j(t_1,t_2,\ldots,t_k)~(j=2,3,\ldots,k).
		\]
		We complete the proof by using that \eqref{simplify_qj} and $f_{j,\ell}$ are homogeneous of degree $j$ and  $P_t(\delta)=0$.
	\end{proof}

	\section*{}\label{appx:B}
	In this appendix, we provide a proof of Lemma \ref{lem:pisumlnsum}.
	\begin{proof}
		Without loss of generality, we divide $n$ into two parts: $n=n_1+n_2$, such that $\deg q_j = 0$ for $j\in\{0,1,\ldots,n_1\}$ and $\deg q_j>0$ for $j\in\{n_1+1,\ldots,n_1+n_2\}$.
		
		We prove the conclusion by induction on $n_2$.
		
		\noindent (1) For $n_2=0$, we prove the conclusion by contradiction again. Assume that
		\begin{equation*}
		\sum_{j=1}^{n_1} p_j \ln q_j=0,\quad\text{in }\tilde{G},
		\end{equation*}
		for some open subset $\tilde{G}\subset\Omega$, where $\ln q_j$ are constants and $\ln q_{j_0}(x)\neq 0$ for any $x\in \tilde{G}$.
		
		For convenience, we assume $j_0 = 1$.  Let $P(t_1,t_2,\ldots,t_{n_1})$ be a homogeneous polynomial satisfying the conclusion of Proposition \ref{k-th} and
		\begin{equation*}
		P(p_2^k,\ldots,p_{n_1}^k,p_1^k)=0,\quad\text{in }\tilde{G}.
		\end{equation*}
		Set $Q=P(p_2^k,\ldots,p_n^k,p_1^k)$. We get from $p_j\in\mathcal{P}_{\mathbb{Q}_+}^{1/k}(j=1,2,\ldots,n_1)$ that $Q\in\mathcal{P}_{\mathbb{Q}_+}$. Note that $P$ is a homogeneous polynomial and monic in $t_n$, we obtain from the definition of $Q$ and $\deg p_1>\displaystyle\max_{2\le j\le n_1}\deg p_j$ that $\deg Q > 0$. Therefore Proposition \ref{prop:pnonzero]} leads to a contradiction to $Q = 0$ in $\tilde{G}$. Thus Lemma \ref{lem:pisumlnsum} is true for $n_2=0$.
		
		\noindent (2) Assume Lemma \ref{lem:pisumlnsum} is true for $n_2\ge 0$. We show that Lemma \ref{lem:pisumlnsum} is true for $n_2+1$. Let $j_0=1$ or $j_0=n_1+1$. It is obvious that the conclusion is true if $p_{n_1+1} = 0$ in $G$. If $p_{n_1+1}\neq 0$ in $G$, then we may assume that
		\[
		\sum_{j=1}^{n_1}p_{j} \ln q_j+\sum_{j=n_1+1}^{n_1+n_2+1} p_{j} \ln q_j=0,\quad\text{in }G,
		\]
		which leads to
		\begin{equation}\label{eq:pisumlnsum0}
			\sum_{j=1}^{n_1} \frac{p_j}{p_{n_1+1}}\ln q_j + \ln q_{n_1+1} + \sum_{j=n_1+2}^{n_1+n_2+1} \frac{p_j}{p_{n_1+1}} \ln q_j=0,\quad\text{in }\tilde{G}
		\end{equation}
		for some open subset $\tilde{G}\subset G$, where $q_j(j=1,\ldots,n_1)$ are constants. Applying $\partial_i(i=1,2,3)$ to \eqref{eq:pisumlnsum0}, we obtain
		\begin{equation*}
			\sum_{j=1}^{n_1} \frac{\tilde{p}_j}{p^2_{n_1+1}}\ln q_j + \sum_{j=n_1+2}^{n_1+n_2+1}\frac{p_j\partial_i q_j}{p_{n_1+1}q_j} +\frac{\partial_i q_{n_1+1}}{q_{n_1+1}} + \sum_{j=n_1+2}^{n_1+n_2+1} \frac{\tilde{p}_j}{p^2_{n_1+1}} \ln q_j=0,\quad\text{in }\tilde{G},
		\end{equation*}
		where $\tilde{p}_j=p_{n_1+1}\partial_i p_j-p_j\partial_i p_{n_1+1}$, $j=1,\ldots,n_1+n_2+1$. It is easy to see that $p_{n_1+1}^k p_j^k\xi_i \tilde{p}_j\in \mathcal{P}_{\mathbb{Q}_+}^{1/k}$.
		
		If $j_0=1$, then there exists $i$ such that $\deg\tilde{p}_1=\deg p_1+\deg p_{n_1+1}-1$. Thus we have
		\begin{align*}
			\deg \tilde{p}_1 - \deg p^2_{n_1+1} &> \deg \tilde{p}_j - \deg p^2_{n_1+1},~ j=2,\ldots,n_1+n_2+1,\\
			\deg \tilde{p}_1 - \deg p^2_{n_1+1} &> \deg (p_j\partial_i q_j) - \deg(p_{n_1+1}q_j),~ j=n_1+2,\ldots,n_1+n_2+1,\\
			\deg \tilde{p}_1 - \deg p^2_{n_1+1} &> \deg\partial_i q_{n_1+1}-q_{n_1+1}.
		\end{align*}
		
		If $j_0 = n_1+1$, then we pick up $i$ satisfying $\deg_i q_{n_1+1} > 0$. It follows that
		$$\deg\partial_i q_{n_1+1} - \deg q_{n_1+1}=-1.$$
Consequently,
		\begin{align*}
			\deg\partial_i q_{n_1+1} - \deg q_{n_1+1} &> \deg \tilde{p}_j - \deg p^2_{n_1+1},~ j=1,\ldots,n_1+n_2+1,\\
			\deg\partial_i q_{n_1+1} - \deg q_{n_1+1} &> \deg (p_j\partial_i q_j) - \deg(p_{n_1+1}q_j),~ j=n_1+2,\ldots,n_1+n_2+1.
		\end{align*}
		Thus we conclude from the induction hypothesis that Lemma \ref{lem:pisumlnsum} is true when $n_2$ is replaced by $n_2+1$. This completes the proof.
	\end{proof}
	
\bibliographystyle{siam}

\end{document}